\newtheorem{theorem}{Theorem}[section]
\newtheorem{conj}[theorem]{Conjecture}
\newtheorem{lem}[theorem]{Lemma}
\def\00{\mathbf{0}}
\def\Z{\mathbb{Z}}
\def\Aut{\hbox{\rm Aut}}
\def\Cay{\hbox{\rm Cay}}
\def\FF{{\mathcal F}}
\def\Ga{\Gamma}
\def\Proofs{\par\noindent{\bf Proof of Theorems~}}
\def\qed{\hfill$\Box$\vspace{12pt}}
\begin{document}

\title{Nowhere-zero 3-flows in Cayley graphs on supersolvable groups}
\author[a]{Junyang Zhang}
\author[b]{Sanming Zhou}
\affil[a]{\small School of Mathematical Sciences, Chongqing Normal University, Chongqing 401331, P. R. China}
\affil[b]{\small School of Mathematics and Statistics, The University of Melbourne, Parkville, VIC 3010, Australia}
\date{}

\openup 0.5\jot
\maketitle

\renewcommand{\thefootnote}{\fnsymbol{footnote}}
 \footnotetext{E-mail addresses: jyzhang@cqnu.edu.cn (Junyang Zhang); sanming@unimelb.edu.au (Sanming Zhou)}

\begin{abstract}
Tutte's 3-flow conjecture asserts that every $4$-edge-connected graph admits a nowhere-zero $3$-flow. We prove that this conjecture is true for every Cayley graph of valency at least four on any supersolvable group with a noncyclic Sylow $2$-subgroup and every Cayley graph of valency at least four on any group whose derived subgroup is of square-free order.

\medskip
{\em Keywords:} integer flow; nowhere-zero 3-flow; Tutte's 3-flow conjecture; Cayley graph; supersolvable group

\medskip
{\em AMS subject classifications (2020):} 05C21, 05C25
\end{abstract}

\section{Introduction}
\label{sec:int}

All graphs considered in this paper are finite, undirected and loopless, with parallel edges allowed. In other words, graphs considered in this paper are loopless multigraphs \cite{BM2008}. We use the standard term ``simple graph" to mean a graph with no parallel edges. As usual, for a graph $\Ga$, we use $V(\Ga)$ and $E(\Ga)$ to denote its vertex set and edge set, respectively. The reader is referred to \cite{BM2008} for graph theoretical terminoloy not defined in this paper.

An \emph{orientation} of a graph $\Gamma$ is a digraph $D$ with vertex set $V(\Gamma)$ which is obtained from $\Gamma$ by endowing each edge with one of the two possible directions, where parallel edges may be endowed with opposite directions. Each oriented edge thus obtained is called an \emph{arc} of $D$. For convenience, we use the same notation $D$ to denote the set of arcs of $D$. So whenever we write $e \in D$ we mean $e$ is an arc of $D$. For a vertex $v\in V(\Gamma)$, we use $D^{+}(v)$ to denote the set of arcs of $D$ with tail $v$ and $D^{-}(v)$ the set of arcs of $D$ with head $v$. Let $k$ be a positive integer. A \emph{$k$-flow} \cite{BM2008} in $\Gamma$ is a pair $(D, \varphi)$, where $D$ is an orientation of $\Ga$ and $\varphi$ is a map from $D$ to the set of integers such that $|\varphi(e)|<k$ for every $e \in D$ and
$$
\sum\limits_{e\in D^{+}(v)}\varphi(e) =\sum\limits_{e\in D^{-}(v)}\varphi(e)
$$
for every $v\in V(\Gamma)$. If, in addition, $\varphi(e) \ne 0$ for every $e \in D$, then $(D, \varphi)$ is called a \emph{nowhere-zero $k$-flow} in $\Gamma$. It is well known (see, for example, \cite[Theorem 1.2.8]{Z1997}) that whenever $\Gamma$ admits a nowhere-zero $k$-flow $(D, \varphi)$ for some orientation $D$ of $\Ga$, then it admits a nowhere-zero $k$-flow $(D', \varphi')$ for any orientation $D'$ of $\Gamma$. In other words, the existence of a nowhere-zero $k$-flow only depends on the graph but not on any specific choice of orientation. As such we can choose any orientation when determining whether a graph admits a nowhere-zero $k$-flow. Obviously, if a graph admits a nowhere-zero $k$-flow, then it admits a nowhere-zero $(k+1)$-flow.

The study of nowhere-zero integer flows in graphs began with Tutte \cite{T1949,T1954} who proved that a planar graph admits a nowhere-zero $4$-flow if and only if the Four Color Conjecture holds. Up to now researchers have produced a large number of results on nowhere-zero integer flows, as one can find in the survey \cite{J1988} and monograph \cite{Z1997}. In \cite{T1949,T1954}, Tutte proposed three important conjectures on integer flows which are still open in their general form. One of them is the following well-known $3$-flow conjecture (see \cite[Conjecture 1.1.8]{Z1997}).

\begin{conj}[Tutte]
\label{Tutte}
Every $4$-edge-connected graph admits a nowhere-zero $3$-flow.
\end{conj}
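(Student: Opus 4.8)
The plan is to work with the $\mathbb{Z}_3$-flow reformulation rather than with integer flows directly. By Tutte's classical equivalence between nowhere-zero $k$-flows and nowhere-zero $\mathbb{Z}_k$-flows, a graph $\Ga$ admits a nowhere-zero $3$-flow if and only if it admits an orientation $D$ in which $|D^{+}(v)| \equiv |D^{-}(v)| \pmod 3$ for every $v \in V(\Ga)$, that is, a \emph{modular $3$-orientation}. First I would reduce the problem: a standard splitting-and-contraction argument (due to Kochol) shows that a minimum counterexample to Conjecture~\ref{Tutte}, were one to exist, would be $5$-edge-connected. Hence it suffices to establish the statement for $5$-edge-connected graphs. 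This eliminates low-degree vertices and small edge cuts from consideration and is what makes an inductive attack feasible at all.

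Next I would strengthen the target to a property amenable to induction, namely $\mathbb{Z}_3$-\emph{connectivity} in the sense of Jaeger, Linial, Payan and Tarsi: $\Ga$ is $\mathbb{Z}_3$-connected if for every prescribed demand $b\colon V(\Ga)\to\mathbb{Z}_3$ with $\sum_{v}b(v)=0$ there exist an orientation $D$ and a nowhere-zero map $\varphi\colon D\to\mathbb{Z}_3\setminus\{0\}$ whose boundary at each vertex equals $b(v)$. The existence of a nowhere-zero $3$-flow is precisely the case $b\equiv 0$. The advantage of this stronger property is that it contracts well: if $H$ is a $\mathbb{Z}_3$-connected subgraph of $\Ga$, then $\Ga$ has a nowhere-zero $3$-flow as soon as the quotient $\Ga/H$ does. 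One can therefore repeatedly contract highly connected pieces and argue by induction on $|V(\Ga)|$.

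The core of the argument would then be a reducible-configuration analysis. Assuming $\Ga$ is a $5$-edge-connected counterexample of minimum order, I would look for an \emph{unavoidable} set of local configurations---vertices of bounded degree together with their neighbourhoods, or small $\mathbb{Z}_3$-connected substructures---whose presence permits a contraction that produces a smaller $5$-edge-connected graph still violating the conjecture, contradicting minimality. A discharging scheme carried on the degree sequence, combined with known building blocks (for instance, that sufficiently edge-connected graphs are already $\mathbb{Z}_3$-connected), would be used to force such a configuration to exist in any minimum counterexample.

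The hard part, and the reason the conjecture remains open, is exactly this last step. Thomassen's method, refined by Lov\'asz, Thomassen, Wu and Zhang, establishes $\mathbb{Z}_3$-connectivity for $6$-edge-connected graphs; but the inductive machinery consumes roughly two units of edge-connectivity at each contraction---one to keep the quotient $4$-edge-connected and one of \emph{slack} needed to propagate the group-connectivity hypothesis---so it stalls well above the threshold $4$ demanded by Conjecture~\ref{Tutte}. Closing this gap, namely pushing the group-connectivity induction down from edge-connectivity $6$ (or $5$) to $4$ without forfeiting the slack, is the genuine obstacle; overcoming it would require a fundamentally new idea rather than a routine sharpening of the existing discharging argument. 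For this reason the realistic aim is not a full resolution here but a verification on structured families, such as the Cayley graphs treated in this paper, where the ambient group symmetry supplies the extra leverage the general induction lacks.
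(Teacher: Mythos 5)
You have not given a proof, and you say so yourself in your final paragraph; the concrete gap is the entire third step of your outline. Everything before it is an accurate survey of known reductions: the equivalence of nowhere-zero $3$-flows with modular $3$-orientations (Tutte), Kochol's reduction of Conjecture~\ref{Tutte} to the $5$-edge-connected case, the $\mathbb{Z}_3$-connectivity framework of Jaeger, Linial, Payan and Tarsi with its contraction lemma, and the Thomassen/Lov\'asz--Thomassen--Wu--Zhang theorem that $6$-edge-connectivity suffices. But the ``reducible-configuration analysis'' that is supposed to finish the argument is a placeholder, not an argument: you never exhibit an unavoidable set, never specify a discharging scheme, and never show that contracting a configuration preserves both $5$-edge-connectivity and the counterexample property. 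As you correctly diagnose, the group-connectivity induction loses edge-connectivity at each contraction, which is exactly why the method stalls at $6$ and why the conjecture is open; asserting that a ``fundamentally new idea'' would be needed is a statement of the obstacle, not a way past it. So the proposal cannot be repaired within the strategy it describes.

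There is no proof in the paper to compare against, because the statement you were given is Tutte's conjecture itself, which the paper explicitly labels as open and does not claim to resolve. What the paper actually proves are two special cases --- Theorems~\ref{Cayley} and~\ref{Cayley1}, for Cayley graphs of valency at least four on supersolvable groups with noncyclic Sylow $2$-subgroup, or on groups whose derived subgroup has square-free order --- and it does so by a route entirely unlike yours: no group connectivity and no discharging, but rather elementary flow-gluing (Lemma~\ref{union}), decompositions of the Cayley graph into generalized closed ladders whose shared edges are rungs (Theorems~\ref{cup} and~\ref{cladder}), passage to quotient Cayley graphs modulo a minimal normal subgroup (Lemma~\ref{quotient}), and induction on the group order. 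Your closing remark that ``ambient group symmetry supplies the extra leverage the general induction lacks'' is in fact a fair description of what the paper does; but as a proof of the conjecture as stated, your submission has an unfilled hole exactly where the mathematics is hardest.
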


This conjecture has been studied extensively for more than half a century. In 1979, Jaeger
\cite{J1979} proved that every $4$-edge-connected graph admits a nowhere-zero $4$-flow, and he further conjectured that there is a positive integer $k$ such that every $k$-edge-connected graph admits a nowhere-zero $3$-flow. Jaeger's conjecture was confirmed by Thomassen \cite{Th2012} who proved that the statement is true when $k=8$. This breakthrough was further improved by Lov\'asz, Thomassen, Wu and Zhang \cite{LTWZ2013} who proved that every $6$-edge-connected graph admits a nowhere-zero $3$-flow.

We follow \cite{R1995} for group theoretical terminology. All groups considered in this paper are finite. A graph is called \emph{vertex-transitive} if its automorphism group acts transitively on its vertex set. It is obvious that every vertex-transitive graph is regular. The following concept introduced by Authur Cayley enables us to construct many, but not all, vertex-transitive graphs: Let $G$ be a group with identity element $1$. A multiset $X$ with elements from $G\setminus\{1\}$ is called a \emph{connection multiset} if it satisfies $X = X^{-1} := \{x^{-1}: x \in X\}$ and for each $x \in X$ the multiplicities of $x$ and $x^{-1}$ in $X$ are equal. Given a connection multiset $X$ of $G$, the \emph{Cayley graph} on $G$ with respect to $X$, denoted by $\Cay(G,X)$, is defined to be the graph with vertex set $G$ such that the number of edges joining $g$ and $h$ is equal to the multiplicity of $g^{-1}h$ in $X$. (In particular, if $g^{-1}h \notin X$, then there is no edge between $g$ and $h$.) Note that $\Cay(G,X)$ may contain parallel edges but is loopless as $1 \notin X$. Obviously, $\Cay(G, X)$ is regular, with valency the cardinality of $X$, and it is connected if and only if $X$ generates $G$. In the case when $X$ is a \emph{connection set} (that is, the multiplicity of each element of $X$ is $1$), $\Cay(G,X)$ is a simple Cayley graph.

Nowhere-zero integer flows in vertex-transitive graphs have attracted considerable attention.
Alspach and Zhang conjectured (at the Louisville workshop on Hamilton cycles in 1992) that every Cayley graph with valency at least two admits a nowhere-zero $4$-flow. Since every $4$-edge-connected graph admits a nowhere-zero $4$-flow \cite{J1979}, this conjecture is reduced to the cubic case.
Alspach, Liu and Zhang \cite[Theorem 2.2]{ALZ1996} confirmed this conjecture for cubic simple
Cayley graphs on solvable groups. In \cite{NS2001}, Nedela and \v Skoviera proved that any counterexample to the conjecture of Alspach and Zhang must be a regular cover over a Cayley graph on an almost simple group. In \cite{P2004}, Poto\v cnik proved that if a connected cubic simple graph other than the Petersen graph admits a solvable vertex-transitive group of automorphisms, then it admits a nowhere-zero $4$-flow. This can be viewed as a generalization of the above-mentioned result of Alspach, Liu and Zhang \cite[Theorem 2.2]{ALZ1996} as the Petersen graph is not a Cayley graph.

It is well known that every connected vertex-transitive graph of valency $k\geq 1$ is $k$-edge connected \cite{M1971}. Thus, when restricted to the class of vertex-transitive graphs, Conjecture \ref{Tutte} asserts that every vertex-transitive graph of valency at least four admits a nowhere-zero 3-flow. In this regard, Poto\v cnik, \v Skoviera and \v Skrekovski \cite{PSS2005} proved that every Cayley graph of valency at least four on an abelian group admits a nowhere-zero $3$-flow. This result was improved by N\'an\'asiov\'a and \v Skoviera \cite[Theorem 4.3]{NS2009} who proved that Tutte's $3$-flow conjecture is true for Cayley graphs of valency at least four on nilpotent groups. Conjecture \ref{Tutte} has also been confirmed for simple Cayley graphs of valency at least four on dihedral groups \cite{YL2011}, generalized dihedral groups \cite{LL2015}, generalized quaternion groups \cite{LL2015}, generalized dicyclic groups \cite{AI2019}, and groups of order $pq^2$ for any pair of primes $p, q$ \cite{ZZ2022}. In \cite{LZ2016}, Li and Zhou proved that Conjecture \ref{Tutte} is true for simple graphs with valency at least four which admit a solvable arc-transitive group of automorphisms. In \cite{ZT2022}, Zhang and Tao proved that Conjecture \ref{Tutte} is true for simple graphs with order twice an odd number and valency at least four whose automorphism groups contain a solvable vertex-transitive subgroup which contains a central involution. In a recent paper \cite{ZZ22}, the authors proved that Conjecture \ref{Tutte} is true for vertex-transitive graphs of valency at least four whose automorphism groups contain a nilpotently vertex-transitive subgroup.

In this paper we prove that Conjecture \ref{Tutte} is true for Cayley graphs on two families of supersolvable groups. Recall from group theory that a group $G$ is called \emph{solvable} \cite{R1995} if it possesses a normal series $\{1\} = G_{0}\leq G_{1} \leq\cdots \leq G_{n} =G$ such that the quotient group $G_{i}/G_{i-1}$ is abelian for $1 \leq i \leq n$. A group $G$ is said to be \emph{supersolvable} \cite{R1995} if it has such a normal series with the property that $G_{i}/G_{i-1}$ is cyclic for $1 \leq i \leq n$. The \emph{derived subgroup} of a group $G$, denoted by $G'$, is the subgroup generated by all commutators $[x,y]:=x^{-1}y^{-1}xy$ ($x, y\in G$) of $G$.

The main results in this paper are as follows.

\begin{theorem}
\label{Cayley}
Every connected Cayley graph of valency at least four on any supersolvable group with a noncyclic Sylow $2$-subgroup admits a nowhere-zero $3$-flow.
\end{theorem}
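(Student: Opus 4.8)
The plan is to use throughout the standard reformulation (Tutte) that a graph admits a nowhere-zero $3$-flow if and only if it has a \emph{mod-$3$ orientation}, i.e. an orientation $D$ with $|D^+(v)|\equiv |D^-(v)|\pmod 3$ at every vertex $v$. My first move is to reduce the theorem to the single valency $5$. A connected Cayley graph of even valency has all degrees even, hence is Eulerian and carries a nowhere-zero $2$-flow and therefore a nowhere-zero $3$-flow; a connected Cayley graph of odd valency $d\ge 7$ is $d$-edge-connected by the result of Mader \cite{M1971} quoted in the Introduction, hence $6$-edge-connected, hence admits a nowhere-zero $3$-flow by Lov\'asz, Thomassen, Wu and Zhang \cite{LTWZ2013}. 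Thus only $|X|=5$ needs an argument, and for a $5$-regular graph the condition $|D^+(v)|\equiv|D^-(v)|\pmod 3$ is equivalent to $|D^+(v)|\in\{1,4\}$ for all $v$; counting edges forces exactly half the vertices to have out-degree $4$ and half out-degree $1$ (note $4\mid|G|$, so $|G|$ is even).

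Next I would extract the group-theoretic input. Since a noncyclic Sylow $2$-subgroup has order at least $4$, we have $4\mid|G|$; and because $G$ is supersolvable it has a Sylow tower with the primes in decreasing order, so (as $2$ is the smallest prime) $G$ has a normal Hall $2'$-subgroup $K$ of odd order with $G/K$ isomorphic to a Sylow $2$-subgroup $P$. As $P$ is noncyclic, $P/\Phi(P)\cong \Z_2^{\,d}$ with $d\ge 2$, which surjects onto $\Z_2\times\Z_2$; composing yields an epimorphism $G\to\Z_2\times\Z_2$. Equivalently, writing $W:=G/G'G^2$ for the largest elementary abelian $2$-quotient, we have $\dim W=e\ge 2$, and the nonzero functionals $\psi$ on $W$ index the index-$2$ subgroups $H_\psi:=\ker\psi$ of $G$.

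The heart of the construction realizes the required orientation from one such $H_\psi$. A generator $x$ lies outside $H_\psi$ precisely when $\psi(\bar x)=1$, where $\bar x\in W$ is the image of $x$. The key step is to find $\psi$ with at least three of the five generators outside $H_\psi$; then, setting $A=H_\psi$ and $B$ its nontrivial coset, I prescribe out-degree $4$ on $A$ and out-degree $1$ on $B$. Since the internal valency within each part is at most $2$, the induced subgraphs on $A$ and on $B$ are disjoint unions of cycles and matchings, and a routine check of Hakimi's condition $e(S)\le\sum_{v\in S}p(v)$ shows an orientation with these out-degrees exists (explicitly: orient a suitable matching of the cross-edges one way, the rest the other, and the $2$-regular internal graphs as directed cycles). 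A short case analysis on the multiset of images $\bar x\in W$ shows such a $\psi$ always exists \emph{unless} $W\cong\Z_2\times\Z_2$ and the images are extremely concentrated, namely at most two generators have nonzero image, or exactly three do and these are the three distinct nonzero elements of $W$.

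This leaves the degenerate configurations, which I expect to be the main obstacle. In them $W\cong\Z_2\times\Z_2$ and, by the inverse-doubling argument (a generator alone in a nonzero class cannot have its inverse there too), the generators with nonzero image are forced to be involutions, so the cross-edges project onto $K_4$, which itself has no nowhere-zero $3$-flow and on which no coset-based out-degree assignment works uniformly. My plan here is induction on $|G|$: if $G$ is nilpotent (in particular a $2$-group) the result is already known by N\'an\'asiov\'a and \v Skoviera \cite{NS2009}; otherwise $|G|$ has an odd prime divisor and, using the normal Sylow subgroup for the largest prime provided by supersolvability, I select a normal subgroup $M$ of odd prime order. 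Quotienting by $M$ preserves supersolvability and, as $M$ has odd order, the noncyclic Sylow $2$-subgroup; and provided $M$ meets $X$ trivially and separates the elements of $X$, the natural map $\Cay(G,X)\to\Cay(G/M,Y)$ (with $Y$ the image of $X$) is an $|M|$-fold covering preserving the valency, so a mod-$3$ orientation of the smaller graph lifts edge-by-edge, a covering preserving in- and out-degrees at every vertex. The delicate points, which I anticipate absorb most of the technical work, are guaranteeing a suitable $M$ disjoint from $X$ and separating it (so the quotient is a genuine valency-preserving cover satisfying the induction hypothesis), and treating directly, when no such $M$ exists, the residual all-involution configurations by an explicit orientation adapted to the $K_4$-pattern together with the edges inside the fibers.
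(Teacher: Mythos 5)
Your generic-case idea is sound and genuinely different from the paper's route: finding an index-$2$ subgroup $H$ with at least three of the five generators outside it and then producing a mod-$3$ orientation with out-degree $4$ on $H$ and $1$ on its coset does pass Hakimi's feasibility test, and it cleanly dispatches every configuration except the degenerate ones you list. However, the proof as written has two genuine gaps. The smaller one is the reduction to valency $5$: the paper's Cayley graphs are multigraphs (connection multisets are allowed), and Mader's theorem on edge connectivity is a statement about simple vertex-transitive graphs. For example, $\Cay(\Z_2\times\Z_2,\{a,b,c,c,c,c,c\})$, with $a,b,c$ the three involutions, is a connected $7$-valent Cayley graph on a group with noncyclic Sylow $2$-subgroup whose edge connectivity is only $4$ (only four edges join $\{1,c\}$ to $\{a,b\}$), so the Lov\'asz--Thomassen--Wu--Zhang theorem does not apply and your valency-$\ge 7$ case is not covered.

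The serious gap is that the degenerate configurations are not a residual nuisance but the actual core of the theorem, and for them you offer a plan rather than a proof. Your fallback is to quotient by a normal subgroup $M$ of odd prime order disjoint from $X$, but such an $M$ need not exist. Take $G=D_{2p}\times\Z_2=\langle r,s\rangle\times\langle t\rangle$ with $p$ an odd prime and $X=\{s,t,st,r,r^{-1}\}$: here $G'G^2=\langle r\rangle$, the three involutions $s,t,st$ map onto the three distinct nonzero elements of $W\cong\Z_2\times\Z_2$, so every index-$2$ subgroup misses exactly two generators; the only minimal normal subgroups are $\langle t\rangle$ and $\langle r\rangle$, both of which meet $X$, so no nontrivial normal subgroup of $G$ is disjoint from $X$ and no quotient reduction is available; and $\Cay(G,\{s,t,st\})$ contains triangles (since $(st)(ts)=1$ with $t$ central), so it is not bipartite and Lemma \ref{regular} gives no escape either. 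This is exactly the situation the paper's Subcase 5.1 resolves via Lemma \ref{index}, whose proof is the elaborate decomposition into generalized closed ladders built up in Theorems \ref{cup} and \ref{cladder}. Your closing sentence --- an ``explicit orientation adapted to the $K_4$-pattern together with the edges inside the fibers'' --- names precisely the construction that is missing, and nothing in the proposal indicates how to carry it out.
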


\begin{theorem}
\label{Cayley1}
Every Cayley graph of valency at least four on any group whose derived subgroup is of square-free order admits a nowhere-zero $3$-flow.
\end{theorem}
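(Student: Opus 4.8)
The plan is to reduce to the connected case, prove that the underlying group is supersolvable, and then split according to whether a Sylow $2$-subgroup is cyclic. First, since every subgroup $H\le G$ satisfies $H'=[H,H]\le[G,G]=G'$, the order of $H'$ divides $|G'|$ and is again square-free; as $\Cay(G,X)$ is a disjoint union of copies of $\Cay(\langle X\rangle,X)$ and a nowhere-zero $3$-flow on each copy yields one on the union, I may assume $G=\langle X\rangle$, so that the graph is connected. I would then show $G$ is supersolvable by verifying that every chief factor has prime order: choosing a chief series through $G'$ (legitimate since $G'\trianglelefteq G$), the factors lying inside $G'$ are elementary abelian sections of the square-free group $G'$, hence of order a single prime, while the factors lying above $G'$ are chief factors of the abelian group $G/G'$, on which $G$ acts trivially by conjugation, hence also of prime order. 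Thus $G$ is supersolvable, and since the derived subgroup of a supersolvable group is nilpotent, $G'$ is a nilpotent group of square-free order, and therefore cyclic.

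With $G$ supersolvable I invoke the dichotomy on a Sylow $2$-subgroup $P$. If $P$ is noncyclic, Theorem~\ref{Cayley} applies directly and we are done, so the real content is the case that $P$ is cyclic. Here I first dispose of the easy situations. Writing $k=|X|\ge 4$ for the valency, if $k$ is even then every vertex has even degree, so each component of $\Cay(G,X)$ is Eulerian and the graph admits a nowhere-zero $2$-flow, hence a nowhere-zero $3$-flow. I may therefore assume $k$ is odd (so $k\ge 5$ and $|G|$ is even), which forces $X$ to contain an odd number of involutions counted with multiplicity.

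For this remaining case I would exploit the normal $2$-complement. Since $P$ is cyclic and $2$ is the least prime divisor of $|G|$, Burnside's normal $p$-complement theorem gives $G=N\rtimes P$ with $N$ of odd order. I then decompose the edge set: each pair $\{x,x^{-1}\}$ with $x\ne x^{-1}$ contributes a $2$-factor (a union of cycles, hence an even subgraph), while each involution contributes a perfect matching. Pairing the involution-matchings with one another (two copies of one involution give digons, two distinct involutions give even cycles) yields even $2$-factors and leaves exactly one perfect matching $M$ over, because the number of involutions is odd. Recalling that the edge-disjoint union of a spanning subgraph with all degrees even and a subgraph admitting a nowhere-zero $3$-flow again admits one (add the all-ones $\mathbb{Z}_3$-flow of an Eulerian orientation of the former to a nowhere-zero $\mathbb{Z}_3$-flow of the latter), it suffices to incorporate $M$ into a nowhere-zero $3$-flow of a chosen small even piece. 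When some homomorphism $G\to\mathbb{Z}_2$ sends every involution to the nontrivial element, this is easy: choosing a $2$-factor $F$ whose edges also cross the induced bipartition makes $F\cup M$ a bipartite cubic graph, which has a nowhere-zero $3$-flow. This covers in particular the subcase $|P|=2$ via $G\to G/N\cong\mathbb{Z}_2$, and when $2\mid|G'|$ the unique involution of the cyclic group $G'$ is central, so one may quotient by it (the derived subgroup of the quotient is still square-free) and induct on $|G|$.

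The main obstacle is precisely the complementary situation: $P$ cyclic of order at least $4$ with $G'$ of odd order. Here all involutions of $G$ lie in the single coset $N\tau$, where $\tau$ is the unique involution of $P$, but $\tau$ sits in the Frattini part of the cyclic group $P$, so no homomorphism onto $\mathbb{Z}_2$ detects it, and the available even $2$-factors (coming from elements of $N$, of odd order) are unions of \emph{odd} cycles. Consequently the bipartite-cubic shortcut fails, and one must instead construct a global orientation with $|D^{+}(v)|\equiv|D^{-}(v)|\pmod 3$ at every vertex that absorbs the imbalance created by the leftover matching $M$. I expect to build this orientation by generalizing the explicit constructions used for dihedral groups in \cite{YL2011} and generalized dihedral groups in \cite{LL2015}, exploiting the cyclic structure of $G'$ and the semidirect decomposition $G=N\rtimes P$, most likely through an induction along the chief series of the cyclic group $G'$. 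Verifying that the requisite divisibility by $3$ can always be arranged in this generality is the delicate step where the bulk of the work lies.
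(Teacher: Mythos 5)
Your reductions are sound as far as they go: the restriction to the connected case, the proof that $G$ is supersolvable with $G'$ cyclic (this is the paper's Lemma~\ref{derived} plus Lemma~\ref{supersolvable}(ii)), the appeal to Theorem~\ref{Cayley} when a Sylow $2$-subgroup is noncyclic, the disposal of even valency, the pairing of inverse-closed pairs and involutions into even spanning subgraphs plus one leftover matching (the paper's Lemma~\ref{regular}), the bipartite-cubic trick when a homomorphism onto $\mathbb{Z}_2$ kills the relevant generators, and the quotient-and-induct step when $2\mid|G'|$ (Lemma~\ref{quotient}). But the proof is not complete: in the case you yourself isolate as ``the main obstacle'' --- cyclic Sylow $2$-subgroup of order at least $4$ and $G'$ of odd order --- you offer only the expectation that an orientation with the right residues modulo $3$ can be built ``by generalizing the explicit constructions'' for dihedral and generalized dihedral groups, with no construction and no argument that the divisibility conditions can be met. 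That is precisely where the entire difficulty of the theorem lives, so as it stands this is a plan with a hole rather than a proof.

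For comparison, the paper does not attempt a direct global orientation in that case. It runs a single induction on $|G|$ (proving Theorems~\ref{Cayley} and~\ref{Cayley1} simultaneously), picks a minimal normal subgroup $N\leq G'$ of prime order, and splits the connection multiset $X$ into the part $Y$ outside $N$ and the part inside $N$. When $|Y|\geq 5$ it quotients by $N$ and lifts a flow (Lemma~\ref{quotient}); the residual cases $|Y|\in\{1,3\}$ are exactly your hard case, and they are handled not by orientations but by covering the graph with generalized closed ladders and gluing their $3$-flows along rungs (Theorems~\ref{cup} and~\ref{cladder}, fed by Lemmas~\ref{normal}, \ref{2close} and~\ref{index}). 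If you want to complete your approach you would either have to reproduce machinery of comparable strength for absorbing the leftover matching $M$ when all available $2$-factors consist of odd cycles, or switch to a quotient-plus-ladder argument of the paper's type; the $\mathbb{Z}_3$-boundary condition you would need to verify at every vertex is not something the cyclic structure of $G'$ hands you for free.
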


Since any group whose derived subgroup is of square-free order is supersolvabe (see Lemma~\ref{derived}), Theorems \ref{Cayley} and \ref{Cayley1} affirm Tutte's $3$-flow conjecture for Cayley graphs on two families of supersolvable groups.

The rest of this paper is organized as follows. The next section contains three basic lemmas. In Section \ref{sec:ladder}, we will prove a few results about nowhere-zero $3$-flows in what we call ``generalized closed ladders'', namely, circular ladders $\Cay(\Z_{n} \times \Z_2, \{(1,0), (-1,0), (0,1)\})$, M\"obius ladders $\Cay(\Z_{2n}, \{1, -1, n\})$, and graphs obtained from these ladders by subdividing their ``rail edges''. As will be seen later, these results will play a key role in the proof of our main results, and we expect that they may also be useful in studying nowhere-zero $3$-flows in some other graphs not considered in this paper. In Section \ref{sec:lem}, we will prove several technical lemmas on Cayley graphs, especially those which are circular or M\"obius ladders, and the existence of nowhere-zero $3$-flows in such graphs. With these preparations, we will finally present our proof of Theorems \ref{Cayley} and \ref{Cayley1} in Section \ref{sec:proof}.

\section{Basic lemmas}
\label{sec:lem}

The \emph{union} $\Gamma_{1} \cup \Gamma_{2}$ of two graphs $\Gamma_{1}$ and $\Gamma_{2}$ is the graph with vertex set $V(\Gamma_{1})\cup V(\Gamma_{2})$ and edge set $E(\Gamma_{1})\cup E(\Gamma_{2})$ (see \cite[p.2]{BM2008}). Since this operation is associative, we can talk about the union $\cup_{i=1}^n \Gamma_{i}$ of $n$ graphs $\Gamma_{1}, \ldots, \Gamma_{n}$ for any $n \ge 1$.

\begin{lem}
\label{union}
Let $\Gamma$ be a graph. Suppose that there are $n \ge 2$ subgraphs $\Gamma_{1}, \Gamma_{2}, \ldots, \Gamma_{n}$ of $\Gamma$ such that
\begin{enumerate}
\item $\Gamma = \cup_{i=1}^n \Gamma_{i}$;
\item for $1 \le i \le n$, $\Gamma_{i}$ admits a nowhere-zero 3-flow; and
\item for $2 \le l \le n$, $\cup_{i=1}^{l-1} \Gamma_{i}$ and $\Gamma_{l}$ have at most one edge in common.
\end{enumerate}
Then $\Gamma$ admits a nowhere-zero 3-flow.
\end{lem}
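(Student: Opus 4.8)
The plan is to prove this by induction on $n$, using the characterization of nowhere-zero $3$-flows as $\Z_3$-flows (flows valued in the abelian group $\Z_3$). Recall that a graph admits a nowhere-zero $3$-flow if and only if it admits a $\Z_3$-flow that is nowhere zero, where the flow conservation condition is read modulo $3$; this reformulation, due to Tutte, is what makes gluing arguments clean, because a nowhere-zero $\Z_3$-flow on two graphs sharing a single edge can be matched up on that edge by scaling. First I would set $\Lambda_{l} := \cup_{i=1}^{l} \Gamma_{i}$, so that $\Lambda_{1} = \Gamma_{1}$ and $\Lambda_{n} = \Gamma$, and aim to show each $\Lambda_{l}$ admits a nowhere-zero $3$-flow. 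The base case $l=1$ is hypothesis (ii). For the inductive step, assume $\Lambda_{l-1}$ has a nowhere-zero $\Z_3$-flow, and recall from hypothesis (ii) that $\Gamma_{l}$ has one as well.

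The core of the argument is the gluing step, which uses hypothesis (iii): $\Lambda_{l-1}$ and $\Gamma_{l}$ share at most one edge. If they share no edge, then $\Lambda_{l}$ is simply the disjoint-on-edges union of $\Lambda_{l-1}$ and $\Gamma_{l}$ (they may still share vertices, but a $\Z_3$-flow condition is checked vertex by vertex and each edge contributes to exactly one of the two flows), so placing the two nowhere-zero $\Z_3$-flows side by side gives a nowhere-zero $\Z_3$-flow on $\Lambda_{l}$: at every vertex the conservation condition decomposes into the sum of the contributions from the two edge-disjoint parts, each of which already balances. If they share exactly one edge $e$, I would first orient $\Gamma$ once and for all and work with $\Z_3$-valued flows on that fixed orientation. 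Let the existing nowhere-zero $\Z_3$-flow on $\Lambda_{l-1}$ assign value $a \in \{1,2\}$ to $e$, and the one on $\Gamma_{l}$ assign value $b \in \{1,2\}$ to $e$. Since $\Z_3^{*} = \{1,2\}$ is a group under multiplication, I can scale the entire $\Gamma_{l}$-flow by the unit $a b^{-1} \in \Z_3^{*}$; scaling a $\Z_3$-flow by a nonzero constant preserves both the conservation condition and the nowhere-zero property, and now both flows agree on $e$, taking the common value $a$.

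With the two flows agreeing on the shared edge $e$, I would define a combined function $\varphi$ on the arcs of $\Lambda_{l}$ by using the $\Lambda_{l-1}$-value on edges of $\Lambda_{l-1}$ and the (rescaled) $\Gamma_{l}$-value on edges of $\Gamma_{l}$; these prescriptions agree on the overlap $e$, so $\varphi$ is well defined and nowhere zero. It remains to verify flow conservation at each vertex $v$ of $\Lambda_{l}$. The key observation is that the edges incident to $v$ split into those lying in $\Lambda_{l-1}$ and those lying in $\Gamma_{l}$, with $e$ being the only possible edge in both; but because the two flows carry the \emph{same} value on $e$, the net flow through $v$ is exactly the sum of the net flow from the $\Lambda_{l-1}$-part and that from the $\Gamma_{l}$-part, each of which is already zero. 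Here one must be slightly careful: if both endpoints of $e$ are vertices of $v$'s analysis, the contribution of $e$ to conservation is identical whether we think of it as belonging to $\Lambda_{l-1}$ or to $\Gamma_{l}$, precisely because the values agree, so there is no double counting or cancellation defect. Hence $\varphi$ is a nowhere-zero $\Z_3$-flow on $\Lambda_{l}$, completing the induction.

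I expect the main subtlety to be the bookkeeping in this conservation check rather than any deep obstacle: one must confirm that a single shared edge cannot create an inconsistency at a vertex once the two flows have been matched on that edge, and that sharing vertices (but not edges) beyond $e$ causes no problem because flow conservation is additive over an edge-disjoint decomposition. The reduction to $\Z_3$-flows and the use of the multiplicative group structure of $\Z_3^{*}$ to rescale are the two ideas that make the gluing work with the ``at most one common edge'' hypothesis; with more than one shared edge this scaling trick would generally fail, which is exactly why hypothesis (iii) is imposed.
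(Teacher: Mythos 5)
Your overall strategy (induction on $n$, passing to $\Z_3$-valued flows, gluing two flows that overlap in at most one edge) matches the paper's, but the gluing step contains a genuine error. After rescaling so that the flow $f_1$ on $\Lambda_{l-1}$ and the flow $f_2$ on $\Gamma_l$ both assign the same value $a\neq 0$ to the shared edge $e$, you define the combined function $\varphi$ to take the value $a$ on $e$. This $\varphi$ does \emph{not} satisfy conservation at the endpoints of $e$. At such a vertex $v$ the correct identity is
$$\sum_{\text{arcs at }v}\varphi \;=\; \Big(\sum_{\text{arcs of }\Lambda_{l-1}\text{ at }v} f_1\Big) \;+\; \Big(\sum_{\text{arcs of }\Gamma_l\text{ at }v} f_2\Big) \;-\; (\text{contribution of } e),$$
because $e$ is counted in both bracketed sums but only once on the left; hence the left-hand side equals $0+0\mp a\neq 0$. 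Your parenthetical claim that ``there is no double counting or cancellation defect'' is exactly where this is swept under the rug. A concrete counterexample: take two $4$-cycles sharing one edge, each carrying the all-ones flow oriented consistently around the cycle; your $\varphi$ assigns $1$ to every arc of the union, but each endpoint of the shared edge then has one arc of value $1$ going out and two arcs of value $1$ coming in.

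The fix --- and what the paper actually does --- is to extend $f_1$ and $f_2$ by zero to all of $\Lambda_l$ and set $\varphi=f_1+f_2$, so that $\varphi(e)=f_1(e)+f_2(e)$; conservation is then automatic because a sum of circulations is a circulation. The rescaling by a unit (or, in the paper's integer formulation, the replacement of $f_2$ by $-f_2$) serves to guarantee $f_1(e)+f_2(e)\neq 0$, not to make the two values equal and then take that common value. Note that your rescaling does in fact achieve $f_1(e)+f_2(e)=2a\neq 0$ in $\Z_3$, so only the assignment on $e$ and the conservation argument need repairing; in the integer setting one must additionally arrange the combined value on $e$ to lie in $\{\pm1,\pm2\}$ (the paper's sum/difference case split does this, modulo a small range check), or else invoke Tutte's equivalence between nowhere-zero $3$-flows and nowhere-zero $\Z_3$-flows as you do. The edge-disjoint case and the inductive framework in your proposal are fine.
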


\begin{proof}
We proceed by induction on $n$. First, we assume $n=2$. Fix an orientation $D$ of $\Gamma$. Let $D_{i}$ be the restriction of $D$ to $\Gamma_{i}$, for $i=1,2$. Since $\Gamma_{i}$ admits a nowhere-zero 3-flow, there exists a nowhere-zero $3$-flow $(D_{i},\varphi_{i})$ of $\Gamma_{i}$, for $i=1,2$. Note that, by (iii), $\Gamma_{1}$ and $\Gamma_{2}$ have at most one common edge. Define a map $\varphi$ from $D$ to $\Z$ as follows: If $\Gamma_{1}$ and $\Gamma_{2}$ have no common edges, then set
$$
\varphi(e) = \begin{cases}
                \varphi_{1}(e), & \mbox{if}~ e\in D_{1}, \\
                \varphi_{2}(e), & \mbox{if}~ e\in D_{2}; \\
              \end{cases}
$$
if $\Gamma_{1}$ and $\Gamma_{2}$ have a common edge $e_{0}$ and $\varphi_{1}(e_{0})\equiv\varphi_{2}(e_{0})\pmod{3}$, then set
$$
 \varphi(e)= \begin{cases}
                \varphi_{1}(e_{0})+\varphi_{2}(e_{0}), &\mbox{if}~e=e_{0},\\
                \varphi_{1}(e), & \mbox{if}~ e\in D_{1}\setminus\{e_{0}\}, \\
                \varphi_{2}(e), & \mbox{if}~ e\in D_{2}\setminus\{e_{0}\}; \\
              \end{cases}
$$
if $\Gamma_{1}$ and $\Gamma_{2}$ have a common edge $e_{0}$ and $\varphi_{1}(e_{0})\equiv-\varphi_{2}(e_{0})\pmod{3}$, then set
$$
 \varphi(e)= \begin{cases}
                \varphi_{1}(e_{0})-\varphi_{2}(e_{0}), &\mbox{if}~e=e_{0},\\
                \varphi_{1}(e), & \mbox{if}~ e\in D_{1}\setminus\{e_{0}\}, \\
                -\varphi_{2}(e), & \mbox{if}~ e\in D_{2}\setminus\{e_{0}\}. \\
              \end{cases}
$$
It is straightforward to verify that in each case $(D,\varphi)$ is a nowhere-zero 3-flow in $\Gamma$.

Suppose inductively that for some $n>2$ the result holds when $\Ga$ is the union of $n-1$ subgraphs satisfying (i)--(iii). Let $\Ga = \cup_{i=1}^{n} \Gamma_{i}$ satisfy (i)--(iii). Then by our hypothesis $\cup_{i=1}^{n-1} \Gamma_{i}$ admits a nowhere-zero 3-flow. Since $\Gamma=(\cup_{i=1}^{n-1} \Gamma_{i}) \cup \Gamma_{n}$ and by (iii), $\cup_{i=1}^{n-1} \Gamma_{i}$ and $\Gamma_{n}$ have at most one edge in common, it follows from the base case above that $\Gamma$ admits a nowhere-zero 3-flow.
\end{proof}

Let $\Ga$ be a graph, and let $e$ be an edge of $\Ga$ with end-vertices $u$ and $v$. The \emph{subdivision} of $e$ yields the graph obtained from $\Ga$ by deleting $e$, adding a new vertex $w$, and then adding an edge joining $u$ and $w$ and another edge joining $w$ and $v$. A \emph{subdivision} of $\Ga$ is a graph obtained from $\Ga$ by recursively applying edge subdivisions. In other words, a subdivision of $\Ga$ is obtained from $\Ga$ by replacing each of its edges by a path of length at least one. The following result is widely known and can be easily verified.

\begin{lem}
\label{subdivision}
For any positive integer $k$, a graph admits a nowhere-zero $k$-flow if and only if any subdivision of it admits a nowhere-zero $k$-flow.
\end{lem}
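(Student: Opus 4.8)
The plan is to reduce the statement to the case of a single edge subdivision and then prove both implications by a direct, local construction at the new vertex. By the definition of subdivision recalled above, any subdivision of $\Gamma$ arises from $\Gamma$ by a finite sequence of single-edge subdivisions. Hence it suffices to prove the statement when $\Gamma'$ is obtained from $\Gamma$ by subdividing exactly one edge; the general case then follows by induction on the number of single-edge subdivisions, applying the single-edge case at each step. So I would fix an edge $e$ of $\Gamma$ with end-vertices $u$ and $v$, and let $\Gamma'$ be the graph obtained by subdividing $e$ into two edges $e_1=uw$ and $e_2=wv$, where $w$ is the new degree-two vertex.

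For the forward direction, suppose $(D,\varphi)$ is a nowhere-zero $k$-flow in $\Gamma$. Using the freedom to fix any orientation (as recalled before Conjecture~\ref{Tutte}), I would take the orientation $D'$ of $\Gamma'$ that agrees with $D$ off $\{e_1,e_2\}$ and orients the path $u,w,v$ coherently, say $e_1$ from $u$ to $w$ and $e_2$ from $w$ to $v$ when $e$ is oriented from $u$ to $v$ in $D$. Setting $\varphi'(e_1)=\varphi'(e_2)=\varphi(e)$ and $\varphi'=\varphi$ on all other edges, conservation at $w$ holds trivially (the same value flows in along $e_1$ and out along $e_2$), conservation at $u$ and $v$ is inherited from $D$ because the contribution of $e$ is replaced by an identical contribution of $e_1$ or $e_2$, and $\varphi'$ is nowhere zero with every value of absolute value less than $k$. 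Hence $\Gamma'$ admits a nowhere-zero $k$-flow.

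For the converse, suppose $(D',\varphi')$ is a nowhere-zero $k$-flow in $\Gamma'$. The key point is flow conservation at the degree-two vertex $w$: depending on how $D'$ orients $e_1$ and $e_2$ relative to $w$, conservation forces either $\varphi'(e_1)=\varphi'(e_2)$ (when one arc enters and the other leaves $w$) or $\varphi'(e_1)=-\varphi'(e_2)$ (when both enter or both leave $w$). In either case the flow ``passes through'' $w$, so I would define an orientation $D$ of $\Gamma$ and assign to the reinstated edge $e=uv$ this common through-value (with the appropriate sign), keeping $\varphi=\varphi'$ on all other edges. Conservation at $u$ and $v$ then follows exactly as in the forward direction, and since $\varphi'(e_1),\varphi'(e_2)$ are nonzero with absolute value less than $k$, so is the value on $e$. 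Thus $\Gamma$ admits a nowhere-zero $k$-flow. The only mildly delicate step is the orientation bookkeeping at $w$ in this converse direction; handling the cases uniformly is easiest by first reorienting $e_1$ or $e_2$ (again using orientation-independence) so that the flow runs consistently from $u$ to $v$ through $w$, which collapses every case to a single through-value to be assigned to $e$.
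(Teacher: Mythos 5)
Your proposal is correct. The paper gives no proof of this lemma at all (it is stated as ``widely known and can be easily verified''), and your argument --- reducing to a single edge subdivision by induction and then checking flow conservation locally at the new degree-two vertex $w$ in both directions --- is precisely the standard verification the authors are alluding to, including the correct case analysis that conservation at $w$ forces $\varphi'(e_1)=\pm\varphi'(e_2)$ depending on the orientation of the two arcs.
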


It is known (see \cite[Theorem 1.2.10]{Z1997}) that a graph admits a nowhere-zero $2$-flow if and only if all its vertices have even valencies. Note that a nowhere-zero $2$-flow is also a nowhere-zero $3$-flow. It is also known (see \cite[Theorem 4.1.4]{Z1997}) that a $2$-edge-connected cubic graph admits a nowhere-zero $3$-flow if and only if it is bipartite. These results together with Lemma \ref{union} imply the following lemma.

\begin{lem}
\label{regular}
Let $\Gamma$ be a regular graph of odd valency. If $\Gamma$ has a regular spanning subgraph of odd valency which admits a nowhere-zero 3-flow, then $\Gamma$ admits a nowhere-zero 3-flow. In particular, $\Gamma$ admits a nowhere-zero 3-flow if it has a cubic bipartite spanning subgraph.
\end{lem}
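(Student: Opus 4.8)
The plan is to reduce the statement to Lemma~\ref{union} by splitting $\Gamma$ into two edge-disjoint spanning subgraphs, each of which carries a nowhere-zero $3$-flow. Write $d$ for the (odd) valency of $\Gamma$ and let $H$ be the given regular spanning subgraph of odd valency $r$, which by hypothesis admits a nowhere-zero $3$-flow. If $d=r$ then $H=\Gamma$ and there is nothing to prove, so assume $d>r$. First I would form the spanning subgraph $\Gamma'$ obtained from $\Gamma$ by deleting all edges of $H$. Since $\Gamma$ and $H$ are both regular with $V(H)=V(\Gamma)$, the graph $\Gamma'$ is regular of valency $d-r$, which is even because $d$ and $r$ are both odd. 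Hence every vertex of $\Gamma'$ has even valency, and by the quoted characterization (a graph admits a nowhere-zero $2$-flow if and only if all its valencies are even), together with the remark that a nowhere-zero $2$-flow is a nowhere-zero $3$-flow, $\Gamma'$ admits a nowhere-zero $3$-flow.

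Next, observe that $\Gamma=H\cup\Gamma'$ and that $H$ and $\Gamma'$ share no edges at all, so in particular they have at most one edge in common. Thus $H$ and $\Gamma'$ satisfy hypotheses (i)--(iii) of Lemma~\ref{union} with $n=2$, and that lemma produces a nowhere-zero $3$-flow in $\Gamma$. This settles the first assertion.

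For the ``in particular'' clause, it suffices to show that a cubic bipartite spanning subgraph $H$ admits a nowhere-zero $3$-flow, for then $H$ is a regular spanning subgraph of odd valency $3$ and the first part applies. To invoke the quoted result that a $2$-edge-connected cubic graph admits a nowhere-zero $3$-flow if and only if it is bipartite, I first need $H$ to be bridgeless. This follows from a parity count: if $H$ had a bridge $ab$ with $a$ and $b$ in the two colour classes $A$ and $B$, then deleting this bridge and counting the edges of the component containing $a$ along the two sides of the bipartition (writing $A_a,B_a$ for its colour classes) gives $3|A_a|-1=3|B_a|$, i.e. $3(|A_a|-|B_a|)=1$, which is impossible. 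Hence $H$ is bridgeless, so each of its connected components is a $2$-edge-connected cubic bipartite graph and therefore admits a nowhere-zero $3$-flow; combining these (edge-disjoint) flows over the components yields a nowhere-zero $3$-flow in $H$, and the first part finishes the proof.

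The degree bookkeeping for $\Gamma'$ and the verification of the union hypotheses are routine; the only genuinely delicate point is ensuring that $H$ is bridgeless in the ``in particular'' clause, since the cubic $3$-flow characterization is stated only for $2$-edge-connected graphs while a spanning subgraph need not be connected. The modulo-$3$ counting argument above handles this cleanly. Alternatively, one can bypass the issue altogether by exhibiting the flow directly: orienting every edge of $H$ from the colour class $A$ to the colour class $B$ and assigning value $1$ to each edge gives a nowhere-zero $\Z_3$-flow, since every vertex is incident with exactly three edges, all directed the same way, so the net flow is $\pm 3\equiv 0\pmod 3$.
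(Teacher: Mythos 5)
Your proof is correct and follows essentially the same route the paper intends: delete the odd-valency spanning subgraph to leave an even-valency regular graph carrying a nowhere-zero $2$-flow (hence $3$-flow), and glue via Lemma~\ref{union}. Your explicit verification that a cubic bipartite graph is bridgeless (or your direct $A$-to-$B$ orientation giving a $\Z_3$-flow) fills in a detail the paper leaves implicit when citing the characterization for $2$-edge-connected cubic graphs.
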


\section{Generalized closed ladders}
\label{sec:ladder}

The \emph{ladder graph} $L_n$ (where $n \ge 1$) is the Cartesian product of a path of order $n$ with the complete graph $K_2$ of order $2$. Note that $L_n$ is a simple graph. The \emph{circular ladder} $CL_n$ with $n$ rungs (where $n \ge 2$) is the Cartesian product $C_n \Box K_2$, where $C_n$ is the cycle of length $n$, with $C_2$ considered as the graph of two parallel edges between two distinct vertices. Equivalently, $CL_n$ is the Cayley graph $\Cay(\Z_{n} \times \Z_2, \{(1,0), (-1,0), (0,1)\})$. The edges $\{(i,0), (i,1)\}$, $i \in \Z_{n}$ of this Cayley graph are called the \emph{rungs} of $CL_n$, and the other $2n$ edges are called the \emph{rail edges} of $CL_n$. Circular ladders are also known as \emph{prism graphs} in the literature. The \emph{M\"obius ladder} $M_n$ with $n$ rungs (where $n \ge 2$) is the circulant graph $\Cay(\Z_{2n}, \{1, -1, n\})$. The edges $\{i, i+n\}$, $i \in \Z_{2n}$ of $\Cay(\Z_{2n}, \{1, -1, n\})$ are called the \emph{rungs} of $M_n$, and the other $2n$ edges are called the \emph{rail edges} of $M_n$. A \emph{closed ladder} is a circular or M\"obius ladder. Note that all closed ladders except $CL_{2}$ are simple graphs. Note also that each of $CL_{n}$ and $M_{n}$ contains $L_{n}$ as a subgraph.

The following lemma is known in the literature and can be easily verified.

\begin{lem}
\label{ladder}
Let $n\geq2$ be an integer. Then the following hold:
\begin{enumerate}
  \item $L_{n}$ admits a nowhere-zero $3$-flow;
  \item $CL_{n}$ admits a nowhere-zero $3$-flow if and only if $n$ is even;
  \item $M_{n}$ admits a nowhere-zero $3$-flow if and only if $n$ is odd.
\end{enumerate}
\end{lem}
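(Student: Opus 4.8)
The plan is to handle the three parts separately, reducing each to the two classical criteria recalled just before the lemma (a graph has a nowhere-zero $2$-flow, hence a nowhere-zero $3$-flow, iff all valencies are even; a $2$-edge-connected cubic graph has a nowhere-zero $3$-flow iff it is bipartite) together with the gluing result Lemma~\ref{union}. For part (i), I would exhibit $L_n$ as a union of $4$-cycles glued along rungs. Writing the vertices of $L_n$ as $(i,j)$ with $0\le i\le n-1$ and $j\in\{0,1\}$, let $Q_i$ ($0\le i\le n-2$) be the $4$-cycle on $(i,0),(i+1,0),(i+1,1),(i,1)$. Each $Q_i$ is a cycle, so all its valencies are even and it carries a nowhere-zero $2$-flow, hence a nowhere-zero $3$-flow. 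Since $Q_{i-1}$ and $Q_i$ meet in exactly the single rung $\{(i,0),(i,1)\}$, and more generally $\cup_{j<i}Q_j$ meets $Q_i$ in exactly that one edge, the hypotheses (i)--(iii) of Lemma~\ref{union} hold for $L_n=\cup_{i=0}^{n-2}Q_i$, and the lemma yields a nowhere-zero $3$-flow.

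For parts (ii) and (iii) the key observation is that $CL_n$ and $M_n$ are connected cubic Cayley graphs, hence vertex-transitive, so by \cite{M1971} they are $3$-edge-connected and in particular $2$-edge-connected. The cubic criterion then reduces both statements to a bipartiteness test, which I would settle by producing an odd cycle in the non-bipartite case and a proper $2$-colouring in the bipartite case. For $CL_n=C_n\Box K_2$, colouring $(i,j)$ by the parity of $i+j$ is a proper $2$-colouring precisely when $n$ is even (the rail cycle closes up correctly only then); for odd $n$ the rail cycle on one side is an odd $n$-cycle. For $M_n=\Cay(\Z_{2n},\{1,-1,n\})$, colouring each vertex by the parity of its label is proper exactly when $n$ is odd, since a rung joins $i$ to $i+n$; when $n$ is even, the walk $0,1,\dots,n$ along rail edges followed by the rung $\{n,2n\}=\{n,0\}$ is a cycle of odd length $n+1$. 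Combining these with the cubic criterion gives a nowhere-zero $3$-flow for $CL_n$ iff $n$ is even (part (ii)) and for $M_n$ iff $n$ is odd (part (iii)).

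I do not expect a genuine obstacle, as each part is a short reduction; the only points requiring care are the degenerate small cases $CL_2=C_2\Box K_2$ and $M_2\cong K_4$, which are multigraphs. There I would check explicitly that the odd-cycle and $2$-colouring arguments, and the $2$-edge-connectivity needed for the ``bipartite iff nowhere-zero $3$-flow'' equivalence, remain valid; for instance $CL_2$ is bipartite (its only short cycle is the even $2$-cycle formed by a pair of parallel rail edges) while $K_4$ is not, consistent with the parities asserted in the statement.
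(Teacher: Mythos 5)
Your argument is correct. The paper offers no proof of Lemma~\ref{ladder} at all---it simply asserts that the lemma ``is known in the literature and can be easily verified''---and your verification is exactly the intended kind: part (i) via the decomposition of $L_n$ into $4$-cycles glued along single rungs using Lemma~\ref{union} and the even-valency ($2$-flow) criterion, and parts (ii)--(iii) via the cubic-bipartite criterion the paper recalls just before the lemma, with $2$-edge-connectivity supplied by vertex-transitivity and \cite{M1971}. Your explicit attention to the multigraph cases $CL_2$ and $M_2\cong K_4$ (where, if one prefers, $2$-edge-connectivity can also be checked directly rather than through Mader's theorem) closes the only point where care is genuinely needed.
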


Let $n \ge 2$ be an integer. Any graph obtained from $CL_n$ by replacing each rail edge by
a path of length at least one is called a \emph{generalized circular ladder} with $n$ rungs,
and any graph obtained from $M_n$ by replacing each rail edge by a path of length at least one is called a \emph{generalized M\"obius ladder} with $n$ rungs. The edges on such paths replacing rail edges are again called rail edges. A \emph{generalized closed ladder} is a generalized circular or M\"obius ladder. Equivalently, we can define a generalized circular (respectively, M\"obius) ladder with $n$ rungs to be a graph obtained from $CL_n$ (respectively, $M_n$) by a series of subdivisions of rail edges. It is evident that all generalized closed ladders are simple graphs, except the ones obtained from $CL_2$ in such a way that at least one pair of parallel edges are not subdivided.

Given a generalized circular ladder with two rungs, by deleting both or one of its rungs we obtain the vertex-disjoint union $C_m \cup C_n$ of two cycles or the vertex-disjoint union $C_m \cup C_n$ with an additional edge joining a vertex of $C_m$ and a vertex of $C_n$, respectively, for some integers $m, n \ge 2$. Denote the latter graph by $C^*_{m,n}$. By abuse of notation, we treat $C_m \cup C_n$ and $C^*_{m,n}$ as generalized circular ladders with $0$ and $1$ rung, respectively. Similarly, for a generalized M\"obius ladder with two rungs, by deleting both or one of its rungs we obtain a graph which is isomorphic to $C_n$ or $C_n + e$ for some integer $n \ge 4$, respectively, where $e$ is an edge joining two non-consecutive vertices of $C_n$. We treat $C_n$ and $C_n + e$ as generalized M\"obius ladders with $0$ and $1$ rung, respectively.

It is easy to see that $C_m \cup C_n$, $C_n$ and $C_n + e$ all admit nowhere-zero $3$-flows but $C^*_{m,n}$ does not. Combining this with Lemmas \ref{subdivision} and \ref{ladder} and the definition of a generalized closed ladder, we obtain the following lemma.

\begin{lem}
\label{minus}
Let $\Ga$ be a generalized circular ladder with at least one rung or a generalized M\"obius ladder with at least two rungs, and let $e$ be a rung of $\Ga$. Then $\Ga$ admits a nowhere-zero $3$-flow if and only if $\Ga - e$ does not admit any nowhere-zero $3$-flow.
\end{lem}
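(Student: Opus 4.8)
The plan is to reduce everything to the two unsubdivided closed ladders $CL_m$ and $M_m$ via Lemma~\ref{subdivision}, and then to read off the conclusion from the parity characterisation in Lemma~\ref{ladder} together with the base cases recorded just before the lemma. Throughout, write $n$ for the number of rungs of $\Ga$.

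The heart of the argument is the following structural claim, which I would establish first: deleting a rung $e$ from a generalized circular (respectively, M\"obius) ladder with $n$ rungs produces a generalized circular (respectively, M\"obius) ladder with $n-1$ rungs. The two end-vertices of $e$ have valency $3$ in $\Ga$ and valency $2$ in $\Ga - e$, so in $\Ga - e$ they simply become interior vertices of rail paths; hence $\Ga - e$ is again obtained from some closed ladder by subdividing rail edges, and it has exactly $n-1$ rungs. The one point needing care is that the type (circular versus M\"obius) is preserved, and this I expect to be the main --- essentially the only --- obstacle. For the circular case it is transparent, since the two rail cycles of $\Ga$ remain two disjoint cycles in $\Ga - e$, now joined by $n-1$ rungs. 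For the M\"obius case one must check that the characteristic ``twist'' of the single rail cycle survives. I would verify this concretely on $M_n = \Cay(\Z_{2n}, \{1,-1,n\})$: delete the rung $\{0,n\}$, suppress the two resulting degree-$2$ vertices $0$ and $n$, and relabel the surviving rung-endpoints $1,\ldots,n-1,n+1,\ldots,2n-1$ in cyclic order; a short computation shows the rungs then join $k$ to $k+(n-1)$, so that one recovers exactly $M_{n-1}$, not a circular ladder. The same rung-level combinatorics applies to an arbitrary (subdivided) generalized M\"obius ladder.

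Granting the claim, the conclusion follows by a parity count. By Lemma~\ref{subdivision}, $\Ga$ admits a nowhere-zero $3$-flow if and only if the closed ladder it subdivides does, and likewise for $\Ga - e$; for the small cases I would invoke the stated conventions, whereby a generalized circular ladder with $0$ (respectively $1$) rung is $C_m \cup C_n$ (respectively $C^*_{m,n}$) and a generalized M\"obius ladder with $1$ rung is $C_n + e$. Combining Lemma~\ref{ladder}(ii) with the base facts that $C_m \cup C_n$ admits a nowhere-zero $3$-flow while $C^*_{m,n}$ does not, a generalized circular ladder with $k$ rungs admits a nowhere-zero $3$-flow precisely when $k$ is even, for every $k \ge 0$. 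Likewise, Lemma~\ref{ladder}(iii) together with the fact that $C_n + e$ admits a nowhere-zero $3$-flow shows that a generalized M\"obius ladder with $k$ rungs admits a nowhere-zero $3$-flow precisely when $k$ is odd, for every $k \ge 1$.

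Finally, since $n$ and $n-1$ have opposite parities, in either type exactly one of $\Ga$ and $\Ga - e$ satisfies the relevant parity condition, hence exactly one of them admits a nowhere-zero $3$-flow. This is precisely the assertion that $\Ga$ admits a nowhere-zero $3$-flow if and only if $\Ga - e$ does not. The hypotheses ``at least one rung'' in the circular case and ``at least two rungs'' in the M\"obius case guarantee $n \ge 1$ and $n \ge 2$ respectively, so that $\Ga - e$ lands in the range $k \ge 0$ (respectively $k \ge 1$) covered by the two parity statements, which completes the argument.
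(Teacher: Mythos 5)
Your proof is correct and takes essentially the same route as the paper, which gives no explicit proof but simply asserts that the lemma follows from Lemmas~\ref{subdivision} and~\ref{ladder}, the conventions for $0$ and $1$ rungs, and the definition of a generalized closed ladder. Your write-up fills in exactly the details the paper leaves implicit --- in particular the verification that deleting a rung preserves the circular/M\"obius type and drops the rung count by one --- and correctly identifies why the hypothesis ``at least two rungs'' is needed in the M\"obius case.
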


The following result will play a key role in our proof of Theorems \ref{Cayley} and \ref{Cayley1}.

\begin{theorem}
\label{cup}
Let $\Gamma$ be a graph, and let $\Lambda$ and $\Sigma$ be subgraphs of $\Ga$ (possibly with $V(\Lambda) \ne V(\Sigma)$) such that $\Ga = \Lambda \cup \Sigma$. Suppose that $\Lambda$ admits a nowhere-zero $3$-flow. Suppose further that every connected component $\Theta$ of $\Sigma$ is a generalized closed ladder and satisfies the following conditions:
\begin{enumerate}
\item $\Theta$ admits a nowhere-zero $3$-flow if it has no common edges with $\Lambda$;
\item each common edge of $\Theta$ and $\Lambda$, whenever it exists, is a rung of $\Theta$.
\end{enumerate}
Then $\Gamma$ admits a nowhere-zero $3$-flow.
\end{theorem}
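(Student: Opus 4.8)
The plan is to process the connected components of $\Sigma$ one by one, absorbing each into a growing subgraph that starts as $\Lambda$, and at every stage apply Lemma~\ref{union} to guarantee that a nowhere-zero $3$-flow survives. Let $\Theta_1, \Theta_2, \ldots, \Theta_k$ be the connected components of $\Sigma$. I set $\Lambda_0 = \Lambda$ and, for $1 \le j \le k$, define $\Lambda_j = \Lambda \cup \Theta_1 \cup \cdots \cup \Theta_j$, so that $\Lambda_k = \Lambda \cup \Sigma = \Ga$. The goal is to show by induction on $j$ that each $\Lambda_j$ admits a nowhere-zero $3$-flow; the base case $\Lambda_0 = \Lambda$ holds by hypothesis, and the conclusion is the case $j = k$.

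**For the inductive step** I want to pass from $\Lambda_{j-1}$ to $\Lambda_j = \Lambda_{j-1} \cup \Theta_j$ using Lemma~\ref{union}. The verification splits according to how many edges $\Theta_j$ shares with $\Lambda_{j-1}$. Because distinct components of $\Sigma$ are edge-disjoint (indeed vertex-disjoint), every edge that $\Theta_j$ shares with $\Lambda_{j-1}$ must in fact be shared with $\Lambda$; by condition~(ii) each such common edge is a rung of $\Theta_j$. If $\Theta_j$ has no common edge with $\Lambda$, then by condition~(i) it admits a nowhere-zero $3$-flow on its own, and since it also has no common edge with $\Lambda_{j-1}$, Lemma~\ref{union} (applied to the two pieces $\Lambda_{j-1}$ and $\Theta_j$) finishes this case immediately. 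The substantive case is when $\Theta_j$ shares exactly one rung with $\Lambda$: here I again invoke condition~(i)'s counterpart---but note $\Theta_j$ need not admit a nowhere-zero $3$-flow by itself---so I argue instead via Lemma~\ref{minus}, which tells me that since $\Theta_j$ does \emph{not} admit a nowhere-zero $3$-flow, the graph $\Theta_j - e$ obtained by deleting the shared rung $e$ \emph{does}, where $e$ is the common rung.

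**The delicate point**, and what I expect to be the main obstacle, is the case where $\Theta_j$ shares \emph{more than one} rung with $\Lambda$, since then $\Lambda_{j-1}$ and $\Theta_j$ have more than one common edge and Lemma~\ref{union} cannot be applied in a single step. To handle this I would subdivide $\Theta_j$ into pieces along its shared rungs: write the shared rungs as $e_1, \ldots, e_r$ and decompose $\Theta_j$ as a union of sub-ladders $\Psi_1, \ldots, \Psi_r$, each a generalized circular or M\"obius ladder (or one of the degenerate graphs $C_m \cup C_n$, $C^*_{m,n}$, $C_n$, $C_n + e$) containing exactly one of the shared rungs, arranged so that consecutive pieces overlap in a single rail-edge path while each $\Psi_i$ meets $\Lambda$ in only the one rung $e_i$. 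Using Lemma~\ref{minus} on each piece $\Psi_i$ that fails to carry a nowhere-zero $3$-flow, and Lemma~\ref{subdivision} to pass freely between ladders and their subdivisions, I can feed the pieces $\Lambda, \Psi_1, \ldots, \Psi_r$ into Lemma~\ref{union} in an order for which consecutive unions meet in at most one edge.

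**Assembling the pieces**, I would order the components and their sub-ladder decompositions so that condition~(iii) of Lemma~\ref{union} holds throughout: each newly added piece meets the union of all previously added pieces in at most one edge, that edge being either a shared rung with $\Lambda$ (handled by Lemma~\ref{minus}) or a rail-edge of an adjacent sub-ladder. Once this ordering is in place, every piece individually admits a nowhere-zero $3$-flow---either by condition~(i), by Lemma~\ref{minus} after deleting its unique shared rung, or because it is one of the small ladder graphs listed before Lemma~\ref{minus}---and Lemma~\ref{union} then yields a nowhere-zero $3$-flow in the entire union $\Ga$. The crux of the argument is thus the careful geometric bookkeeping needed to realize the one-common-edge condition at each stage; the flow-theoretic content is entirely supplied by Lemmas~\ref{union}, \ref{subdivision}, and \ref{minus}.
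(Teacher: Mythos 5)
Your inductive framework (peel off one connected component of $\Sigma$ at a time and absorb it via Lemma~\ref{union}) is exactly the paper's, and your treatment of the cases of zero or one shared rung is essentially correct, up to two small imprecisions: Lemma~\ref{minus} says that \emph{exactly one} of $\Theta_j$ and $\Theta_j-e$ admits a nowhere-zero $3$-flow, so you must argue both alternatives rather than assuming $\Theta_j$ fails (either way Lemma~\ref{union} applies, since the relevant pair of graphs shares at most the single edge $e$); and Lemma~\ref{minus} does not cover the degenerate case where $e$ is the \emph{only} rung of $\Theta_j$, where one instead observes that $\Theta_j-e$ is $2$-regular and hence admits a nowhere-zero $3$-flow.

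The genuine gap is the case you yourself identify as the crux, namely when $\Theta_j$ shares several rungs with $\Lambda$: you do not resolve it, and the decomposition you sketch would not satisfy the hypotheses of Lemma~\ref{union}. If the pieces $\Psi_1,\dots,\Psi_r$ each contain one shared rung and consecutive pieces overlap in a rail-edge path, then when $\Psi_i$ (for $i\ge 2$) is added to $\Lambda\cup\Psi_1\cup\cdots\cup\Psi_{i-1}$ it meets that union both in its shared rung $e_i$ (an edge of $\Lambda$) and in its overlap with $\Psi_{i-1}$, which may itself consist of more than one edge; so condition (iii) of Lemma~\ref{union} fails, and no reordering fixes this because every piece meets $\Lambda$ from the very first step. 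The paper's resolution is simpler and avoids decomposition entirely: from $\Theta_j$ delete \emph{all} rungs it shares with the current graph $\Gamma'=\Lambda_{j-1}$ except one chosen rung $e$. The resulting graph $\Theta'$ is still a generalized closed ladder (deleting rungs preserves this, allowing the degenerate $0$- and $1$-rung cases), the deleted edges already lie in $\Gamma'$, so $\Gamma'\cup\Theta'=\Gamma'\cup\Theta_j=\Gamma'\cup(\Theta'-e)$, and $\Gamma'$ and $\Theta'$ now share exactly the one edge $e$. Then either $\Theta'$ or $\Theta'-e$ admits a nowhere-zero $3$-flow (by Lemma~\ref{minus}, or by $2$-regularity of $\Theta'-e$ when $e$ is the unique rung of $\Theta'$), and Lemma~\ref{union} completes the step. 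You should replace your sub-ladder decomposition with this rung-deletion argument.
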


\begin{proof}
We proceed by induction on the number of connected components of $\Sigma$. The result is trivially true when the number of connected components of $\Sigma$ is $0$ (that is, when $\Sigma$ is the null graph). Suppose that for some positive integer $m$ the result holds when the number of connected components of $\Sigma$ is $m-1$. Now assume that the number of connected components of $\Sigma$ is $m$. We will show that $\Gamma$ admits a nowhere-zero $3$-flow and thus complete the proof by mathematical induction. Let $\Theta$ be a connected component of $\Sigma$ and set $\Gamma'=\Lambda\cup \Sigma'$, where $\Sigma'$ is the graph obtained from $\Sigma$ by deleting its component $\Theta$. Since $\Gamma=\Lambda\cup\Sigma$, we have $\Gamma=\Gamma'\cup \Theta$. By the induction hypothesis, $\Gamma'$ admits a nowhere-zero $3$-flow.

Consider first the case when $\Lambda$ and $\Theta$ have no common edges. In this case, $\Theta$ admits a nowhere-zero $3$-flow by condition (i), and $\Gamma'$ and $\Theta$ have no common edges. Since $\Gamma=\Gamma'\cup \Theta$ and both $\Gamma'$ and $\Theta$ admit nowhere-zero $3$-flows, it follows from Lemma \ref{union} that $\Gamma$ admits a nowhere-zero $3$-flow.

Now consider the case when $\Lambda$ and $\Theta$ have at least one common edge. By condition (ii), each common edge of them is a rung of $\Theta$. Moreover, since $\Theta$ is a connected component of $\Sigma$, each common edge of $\Lambda$ and $\Theta$ is a common edge of $\Gamma'$ and $\Theta$, and vice versa. Take a common edge $e$ of $\Lambda$ and $\Theta$. Then $e$ is a rung of $\Theta$ and is a common edge of $\Gamma'$ and $\Theta$. Let $\Theta'$ be the graph obtained from $\Theta$ by deleting all common edges of $\Gamma'$ and $\Theta$ except $e$. Since $\Theta$ is a generalized closed ladder and all common edges of $\Gamma'$ and $\Theta$ are rungs of $\Theta$, we see that $\Theta'$ is a generalized closed ladder and $e$ is a rung of $\Theta'$. If $e$ is the unique rung of $\Theta'$, then $\Theta' - e$ is $2$-regular and hence admits a nowhere-zero $3$-flow. If $\Theta'$ has at least two rungs, then by Lemma \ref{minus}, either $\Theta'$ or $\Theta' - e$ admits a nowhere-zero $3$-flow. Since $\Gamma=\Gamma'\cup\Theta=\Gamma'\cup\Theta'=\Gamma'\cup(\Theta' - e)$, it follows from Lemma \ref{union} that $\Gamma$ admits a nowhere-zero $3$-flow.
\end{proof}

The last result in this section, Theorem \ref{cladder} below, will also play an important role in our proof of Theorems \ref{Cayley} and \ref{Cayley1}. This result yields \cite[Lemma 3]{LL2015} in the special case when $\FF$ is a family of closed ladders and $E$ is the union of the sets of rungs of the closed ladders in $\FF$. However, our proof below is different from the proof of \cite[Lemma 3]{LL2015}.

\begin{theorem}
\label{cladder}
Let $\Gamma$ be a graph. Suppose that there exist a family $\mathcal{F}$ of subgraphs of $\Ga$ and a subset $E$ of $E(\Gamma)$ such that the following conditions are satisfied:
\begin{enumerate}
\item $\Gamma$ is the union of the subgraphs in $\mathcal{F}$;
\item each subgraph $\Sigma \in \mathcal{F}$ is a generalized closed ladder, with each edge in $E(\Sigma) \cap E$ a rung of $\Sigma$, and moreover $|E(\Sigma) \cap E| \ge 2$ if $\Sigma$ does not admit any nowhere-zero $3$-flow;
\item each edge in $E$ is a rung of at least two distinct subgraphs in $\mathcal{F}$, and each edge in  $E(\Gamma) \setminus E$ is contained in a unique subgraph in $\mathcal{F}$.
\end{enumerate}
Then $\Gamma$ admits a nowhere-zero $3$-flow.
\end{theorem}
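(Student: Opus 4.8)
The plan is to prove the theorem not by building $\Gamma$ up as a union of the members of $\FF$ (this fails: a sub-union of $\FF$ may fail to carry a nowhere-zero $3$-flow even though $\Gamma$ does, e.g.\ two ``bad'' ladders sharing two rungs), but instead to \emph{decompose} $\Gamma$ into edge-disjoint generalized closed ladders, each admitting a nowhere-zero $3$-flow, and then to reassemble them by Lemma~\ref{union}. First I would record the structural consequence of the hypotheses: by (ii) and (iii), if an edge lies in two members of $\FF$ it must lie in $E$ and hence be a rung of each containing member, so distinct members overlap only in rungs from $E$. Form the \emph{sharing graph} $H$ whose vertices are the members of $\FF$ and whose edges are the rungs in $E$ (a rung shared by exactly two members giving one edge of $H$). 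Since members in different components of $H$ are edge-disjoint, Lemma~\ref{union} lets me assume $H$ is connected.

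The heart of the argument is an \emph{owner assignment}: I would delete each shared rung from all but one of its containing members (its ``owner''), so that every edge of $\Gamma$ survives in exactly one resulting piece $\The$. Each piece is a generalized closed ladder with fewer rungs, and by Lemmas~\ref{ladder} and~\ref{minus} whether $\The$ admits a nowhere-zero $3$-flow is governed by the parity of the number of rungs it retains. Concretely, a member good (admitting a nowhere-zero $3$-flow) must shed an \emph{even} number of shared rungs and a bad member an \emph{odd} number. Translating ``removed from $\Si_i$'' into an orientation of $H$ (orient each shared rung toward its owner, so a member sheds exactly its out-edges), the requirement becomes: orient the edges of $H$ so that the out-degree parity at each vertex equals $1$ if that member is bad and $0$ if it is good. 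The pieces are then edge-disjoint, each good, and Lemma~\ref{union} (with no common edges at each step) finishes.

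The crux is showing this orientation exists. Here I would invoke the standard fact that a connected graph admits an orientation with prescribed degree parities at every vertex if and only if the sum of the prescribed parities is congruent to the number of edges modulo $2$. Applied to a component of $H$, the necessary global parity may fail by exactly $1$; to absorb this I would designate a single shared rung as \emph{genuinely shared}, retained by both of its members and reconciled by the $\pm$-combination in the proof of Lemma~\ref{union} (this removes that rung from the orientation count, flipping the total parity). The decisive structural input is hypothesis (ii): a bad member has $|E(\Si)\cap E|\ge 2$, i.e.\ degree at least $2$ in $H$, so every leaf of $H$ is good and no component of $H$ can consist entirely of bad members — this is precisely the case the parity fix cannot handle. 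When the component is a tree I would use a counting argument (some edge splits off an odd number of good members, which exists exactly because at least one member is good) to place the genuinely shared edge so that both resulting subtrees satisfy their own parity condition; when the component contains a cycle I would put the extra shared rung on a cycle so the remaining owned edges stay connected. Keeping at most one genuinely shared rung per component ensures these rungs form a forest, so the pieces can be ordered to share at most one edge at each stage and Lemma~\ref{union} applies; throughout, the Möbius zero-rung case only adds slack and never needs to be relied upon.

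I expect the main obstacle to be exactly this global parity resolution — verifying that a genuinely shared rung can always be placed to make every piece good, which is where hypothesis (ii) is essential and where the odd-good-subtree splitting argument does the real work. A secondary technical point is rungs lying in three or more members of $\FF$, which cannot be genuinely shared inside a forest gluing; these I would dispose of by a preliminary reduction that fixes owners for them and adjusts the parity targets, after which $H$ is an ordinary graph and the above applies. As an alternative to the final Lemma~\ref{union} assembly one could feed the edge-disjoint good pieces into Theorem~\ref{cup}, and Lemma~\ref{subdivision} guarantees that all parity computations are unaffected by the subdivisions defining generalized closed ladders.
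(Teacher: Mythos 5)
Your overall strategy --- decompose $\Gamma$ into edge-disjoint valid pieces by assigning each shared rung an owner, prescribing that each member shed a number of rungs whose parity matches its badness, and absorbing the global parity obstruction with one genuinely shared rung placed at a good leaf (or on a cycle) of the sharing graph $H$ --- is genuinely different from the paper's proof and is essentially sound when every edge of $E$ lies in exactly \emph{two} members of $\mathcal{F}$: the orientation-with-prescribed-out-degree-parities fact is standard and correct, every leaf of $H$ is indeed good by hypothesis (ii), and the M\"obius zero-rung anomaly only adds slack as you say. The genuine gap is at the point you relegate to a ``secondary technical point'': rungs lying in three or more members, which hypothesis (iii) explicitly permits (``at least two distinct subgraphs''). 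Your proposed reduction --- fix an owner for such a rung and adjust the parity targets of the remaining members --- fails. Take three copies of $CL_4$, each valid, glued along a single common rung $e$, with $E=\{e\}$; this satisfies (i)--(iii). Fixing, say, $\Sigma_1$ as owner forces $\Sigma_2$ and $\Sigma_3$ each to shed $e$, an odd number of rungs, turning each into an invalid generalized circular ladder with three rungs, and neither has any other shared rung with which to restore its parity. The only escape is to let several members retain $e$ simultaneously, i.e., to allow a rung to be genuinely shared by more than two pieces; but then you can no longer guarantee ``at most one genuinely shared rung per component,'' and you must separately show the pieces can be ordered so that each meets the accumulated union in at most one edge --- Lemma \ref{union} breaks if two pieces end up sharing two rungs, or if three pieces pairwise share three distinct rungs. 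None of this is addressed, and it is exactly where your global bookkeeping becomes delicate.

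For comparison, the paper sidesteps all of this with a sequential construction: it adds the members one at a time, and whenever $\Gamma_{i-1}\cup\Sigma_i$ is invalid, Theorem \ref{cup} forces $\Sigma_i$ to be edge-disjoint from $\Gamma_{i-1}$ and itself invalid, so one deletes a fresh $E$-rung $e_i$ of $\Sigma_i$ (restoring validity by Lemma \ref{minus}) and then chooses $\Sigma_{i+1}$ to be \emph{any} other member having $e_i$ as a rung, which restores $e_i$ at the next step. The invariant ``at most one $E$-rung missing at any time'' replaces your global parity resolution, and only one other member per rung is ever needed, so multiplicity three or more costs nothing. Your route could probably be completed, but the hypergraph case needs a real argument rather than a preliminary reduction, and the payoff over the paper's shorter greedy argument is unclear.
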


\begin{proof}
We proceed by recursively constructing a sequence of subgraphs of $\Gamma$ each admitting a nowhere-zero $3$-flow such that the last subgraph in the sequence is $\Ga$ itself. For convenience, any edge in $E$ is called an $E$-rung and a graph is called valid or invalid depending on whether or not it admits a nowhere-zero $3$-flow. Denote by $n$ the cardinality of $\mathcal{F}$. Our construction goes as follows.

\begin{enumerate}
\item[(1)]
\begin{itemize}
\item[(a)] Choose an arbitrary $\Sigma_{1} \in \mathcal{F}$.

\item[(b)] If $\Sigma_{1}$ is valid, then we set $\Gamma_{1} = \Sigma_{1}$.

\item[(c)] If $\Sigma_{1}$ is invalid, then by condition (ii), $\Sigma_{1}$ contains at least two $E$-rungs. Take an $E$-rung $e_{1}$ contained in $\Sigma_{1}$ and set $\Gamma_{1} = \Sigma_{1} - e_{1}$. By Lemma \ref{minus}, $\Gamma_{1}$ is valid.

\item[(d)] Obviously, we have $\Gamma_{1} = \Sigma_{1}$ or $\Gamma_{1} = \Sigma_{1} - e_{1}$.
\end{itemize}

\item[(2)]
\begin{itemize}
\item[(a)] Select $\Sigma_{2} \in \mathcal{F}$ using the following rules: If $\Sigma_{1}$ is valid, then choose an arbitrary $\Sigma_{2}\in\mathcal{F}\setminus\{\Sigma_{1}\}$; if $\Sigma_{1}$ is invalid, then choose $\Sigma_{2}\in\mathcal{F}\setminus\{\Sigma_{1}\}$ such that $e_{1}$ is a common edge of $\Sigma_{1}$ and $\Sigma_{2}$. The existence of $\Sigma_{2}$ in the latter case is guaranteed by condition (iii). Note that in both cases we have $\Gamma_{1}\cup\Sigma_{2}=\Sigma_{1}\cup\Sigma_{2}$.

\item[(b)] If $\Gamma_{1}\cup\Sigma_{2}$ is valid, then we set $\Gamma_{2} = \Gamma_{1}\cup\Sigma_{2}$.
\item[(c)] If $\Gamma_{1}\cup\Sigma_{2}$ is invalid, then by Theorem \ref{cup}, $\Gamma_{1}$ and $\Sigma_{2}$ have no common edges and $\Sigma_{2}$ is invalid. By condition (ii), $\Sigma_{2}$ contains an $E$-rung, say, $e_{2}$, which is different from $e_1$. Since $\Gamma_{1}$ and $\Sigma_{2}$ have no common edges, by the definition of $\Ga_1$ we see that $e_2$ is not contained in $\Sigma_{1}$. Set $\Gamma_{2}=\Gamma_{1}\cup(\Sigma_{2} - e_{2})$. By Lemmas \ref{union} and \ref{minus}, $\Gamma_{2}$ is valid.

\item[(d)] Since $\Gamma_{1}\cup\Sigma_{2}=\Sigma_{1}\cup\Sigma_{2}$, we have $\Gamma_{2} = \Sigma_{1}\cup\Sigma_{2}$ or $\Gamma_{2} = (\Sigma_{1}\cup\Sigma_{2}) - e_{2}$.
\end{itemize}

\item[(3)] Inductively, for $i = 3, \ldots, n$, do the following:
\begin{itemize}
\item[] Assume that we have constructed a valid subgraph $\Gamma_{i-1}$ of $\Ga$ such that either $\Gamma_{i-1} = \Sigma_{1}\cup\cdots\cup\Sigma_{i-1}$ or $\Gamma_{i-1}=(\Sigma_{1}\cup\cdots\cup\Sigma_{i-1}) - e_{i-1}$, where $\Sigma_{1}, \ldots, \Sigma_{i-1}$ are pairwise distinct members of $\mathcal{F}$ and $e_{i-1}$ is an $E$-rung contained in $\Sigma_{i-1}$ but not in $\Sigma_{1}\cup\cdots\cup\Sigma_{i-2}$.

\item[(a)] Select $\Sigma_{i} \in \mathcal{F}$ using the following rules: If $\Gamma_{i-1}=\Sigma_{1}\cup\cdots\cup\Sigma_{i-1}$, then choose an arbitrary $\Sigma_{i}\in\mathcal{F}\setminus\{\Sigma_{1},\ldots,\Sigma_{i-1}\}$; if $\Gamma_{i-1}=(\Sigma_{1}\cup\cdots\cup\Sigma_{i-1}) - e_{i-1}$, then choose $\Sigma_{i}\in\mathcal{F}\setminus\{\Sigma_{1},\ldots,\Sigma_{i-1}\}$ such that $e_{i-1}$ is a common edge of $\Sigma_{i-1}$ and $\Sigma_{i}$. The existence of $\Sigma_{i}$ in the latter case is guaranteed by condition (iii) and the fact that $e_{i-1}$ is not in $\Sigma_{1}\cup\cdots\cup\Sigma_{i-2}$.

\item[(b)] If $\Gamma_{i-1}\cup\Sigma_{i}$ is valid, then we set $\Gamma_{i}=\Gamma_{i-1}\cup\Sigma_{i}$.

\item[(c)] If $\Gamma_{i-1}\cup\Sigma_{i}$ is invalid, then by Theorem \ref{cup}, $\Gamma_{i-1}$ and $\Sigma_{i}$ have no common edges and $\Sigma_{i}$ is invalid.
By condition (ii), $\Sigma_{i}$ contains an $E$-rung, say, $e_{i}$, which is different from $e_{i-1}$. Since $\Gamma_{i-1}$ and $\Sigma_{i}$ have no common edges, by the definition of $\Gamma_{i-1}$ we see that $e_{i}$ is not contained in $\Sigma_{1}\cup\cdots\cup\Sigma_{i-1}$. Set $\Gamma_{i} = \Gamma_{i-1}\cup(\Sigma_{i} - e_{i})$. By Lemmas \ref{union} and \ref{minus}, $\Gamma_{i}$ is valid.

\item[(d)] It can be verified that $\Gamma_{i}=\Sigma_{1}\cup\cdots\cup\Sigma_{i}$ or $\Gamma_{i}=(\Sigma_{1}\cup\cdots\cup\Sigma_{i}) - e_{i}$.

\end{itemize}
\end{enumerate}

When the algorithm above terminates, we obtain $\Gamma_{n}=\Sigma_{1}\cup\cdots\cup\Sigma_{n}$. That is, $\Ga_n$ is the union of the subgraphs in $\mathcal{F}$. Thus, by condition (i), we have $\Gamma=\Gamma_{n}$. Since $\Gamma_{n}$ is valid, it follows that $\Gamma$ admits a nowhere-zero $3$-flow.
\end{proof}

\section{Technical lemmas}
\label{sec:lem}

In this section we prove several technical lemmas which will be used in the proof of Theorems \ref{Cayley} and \ref{Cayley1}. Let us fix our notation first. As usual, for groups $K$ and $H$, we use $ K\times H$ and $K\rtimes H$ to denote the directed product of $K$ and $H$ and the semidirect product of $K$ by $H$, respectively. An \emph{involution} of a group is an element of order $2$. An involution is called a \emph{central involution} if it is commutable with every element of the group. Let $X$ and $Y$ be connection multisets (sets) of a group $G$. If $Y$ is a submultiset (subset) of $X$, then we call $Y$ a \emph{connection submultiset} (\emph{subset}) of $X$. It is well known that every connected component of $\Cay(G,X)$ is isomorphic to $\Cay(\langle X\rangle,X)$. In fact, for any $g\in G$, the connected component of $\Cay(G,X)$ containing $g$ is a \emph{translation} of $\Cay(\langle X\rangle,X)$ in the sense that it is obtained by left-multiplying each vertex of $\Cay(\langle X\rangle,X)$ by $g$ and retaining the adjacency relation. Let $N$ be a normal subgroup of $G$. If $X$ is a connection multiset of $G$ such that $N \cap X = \emptyset$, then $X/N := \{Nx: x \in X\}$ is a connection multiset of the quotient group $G/N$ and thus $\Cay(G/N, X/N)$ is well defined, where the multiplicity in $X/N$ of each $Nx \in X/N$ is defined as the sum of the multiplicities of all $y$ in $X$ with $Ny = Nx$. We call $\Cay(G/N, X/N)$ the \emph{quotient} of $\Cay(G,X)$ with respect to $N$. Note that some elements of $X/N$ may have multiplicities greater than $1$ even if all elements of $X$ have multiplicities $1$. That is, $\Cay(G/N, X/N)$ may have parallel edges even if $\Cay(G,X)$ is a simple Cayley graph.

\begin{lem}
\label{quotient}
{\rm(\cite[Proposition 4.1]{NS2009})}
Let $G$ be a group and $N$ a normal subgroup of $G$. Let $\Cay(G,X)$ be a Cayley graph on $G$ such that $N \cap X=\emptyset$. If $\Cay(G/N, X/N)$ admits a nowhere-zero $k$-flow, then so does $\Cay(G,X)$.
\end{lem}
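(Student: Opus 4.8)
The plan is to exhibit $\Cay(G,X)$ as a covering graph of its quotient $\Cay(G/N,X/N)$ and then lift a nowhere-zero $k$-flow along the covering projection. Since the existence of a nowhere-zero $k$-flow does not depend on the chosen orientation, I am free to pick whatever orientations make the bookkeeping cleanest.

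First I would set up the covering projection. The natural surjection $\pi\colon G\to G/N$, $g\mapsto Ng$, is the map on vertex sets. For its behaviour on edges, recall that the edges of $\Cay(G,X)$ incident with a vertex $g$ are exactly those joining $g$ to $gx$, one for each occurrence of each $x\in X$ (counted with multiplicity), and an edge joining $g$ and $gx$ projects to an edge joining $Ng$ and $(Ng)(Nx)$. The hypothesis $N\cap X=\emptyset$ guarantees $Nx\neq N$, so no edge collapses to a loop. The key structural observation, which I would verify carefully, is that $\pi$ restricts to a bijection from the set of edges incident with $g$ onto the set of edges incident with $Ng$: both sets have total size $|X|$ (the multiplicities in $X/N$ were defined precisely so that total valency is preserved), and the assignment sending the edge through $x$ to the edge through $Nx$ matches each of the $\mathrm{mult}(x)$ parallel edges at $g$ with a distinct edge at $Ng$. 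Thus $\pi$ is a covering map of multigraphs.

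Next I would lift the flow. Let $(D,\psi)$ be a nowhere-zero $k$-flow in $\Cay(G/N,X/N)$. I orient $\Cay(G,X)$ by the pull-back orientation $\tilde D$: each edge of $\Cay(G,X)$ is oriented so that $\pi$ carries it to the corresponding arc of $D$ with tail mapping to tail and head mapping to head, which is well defined precisely because $\pi$ is a local bijection on edges and no edge is a loop. Then I set $\tilde\psi(\tilde e)=\psi(\pi(\tilde e))$ for every arc $\tilde e$ of $\tilde D$. Clearly $0<|\tilde\psi(\tilde e)|=|\psi(\pi(\tilde e))|<k$, so $\tilde\psi$ is nowhere zero and suitably bounded; it remains only to check conservation.

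Finally I would verify the conservation law at an arbitrary vertex $g$. Because $\pi$ is a local bijection preserving arc directions, it restricts to a bijection from $\tilde D^{+}(g)$ onto $D^{+}(Ng)$ and from $\tilde D^{-}(g)$ onto $D^{-}(Ng)$, preserving flow values, so that
\[
\sum_{\tilde e\in \tilde D^{+}(g)}\tilde\psi(\tilde e)
=\sum_{a\in D^{+}(Ng)}\psi(a)
=\sum_{a\in D^{-}(Ng)}\psi(a)
=\sum_{\tilde e\in \tilde D^{-}(g)}\tilde\psi(\tilde e),
\]
the middle equality being conservation for $(D,\psi)$ at $Ng$. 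Hence $(\tilde D,\tilde\psi)$ is a nowhere-zero $k$-flow in $\Cay(G,X)$. The one step demanding genuine care, and the main obstacle, is the verification that $\pi$ is a covering map despite the multiset nature of $X$ and the possibility that distinct elements of $X$ lie in the same coset of $N$, producing parallel edges in the quotient; once the edge-incidence bijection at each vertex is pinned down, the flow-lifting and the conservation check are routine.
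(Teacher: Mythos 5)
Your argument is correct: exhibiting $\Cay(G,X)$ as a covering of $\Cay(G/N,X/N)$ --- with $N\cap X=\emptyset$ ruling out loops and the definition of the multiplicities in $X/N$ making the edge map a local bijection at each vertex --- and then pulling back the orientation and flow values is exactly the standard proof of this fact. The paper gives no proof of its own, quoting the lemma from \cite[Proposition 4.1]{NS2009}, whose proof is this same covering-projection lifting argument, so your route coincides with the expected one.
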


The following lemma is the main result in \cite{PSS2005}.

\begin{lem}
\label{abelian}
{\rm \cite[Theorems 1.1 and 4.2]{PSS2005}}
Every Cayley graph of valency at least four on an abelian group admits a nowhere-zero $3$-flow.
\end{lem}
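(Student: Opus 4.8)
The plan is to split on the parity of the valency $|X|$, writing $\Gamma = \Cay(G,X)$ with $G$ abelian and $|X| \ge 4$. If $|X|$ is even then every vertex of $\Gamma$ has even valency, so $\Gamma$ carries a nowhere-zero $2$-flow (by the even-valency criterion recalled just before Lemma~\ref{regular}), hence a nowhere-zero $3$-flow. Thus the whole difficulty lies in odd valency $|X| \ge 5$, where the identity $X = X^{-1}$ forces the non-involutions of $X$ to occur in pairs $\{a, a^{-1}\}$ of equal multiplicity, so the total multiplicity of involutions in $X$ is odd; in particular $X$ contains an involution. I would treat $X$ as a set (handling multiplicities separately, since extra parallel edges only create $2$-cycles that are glued in harmlessly via Lemma~\ref{union}), and let $m$ be the number of involutions and $P$ the number of pairs $\{a, a^{-1}\}$, so $|X| = m + 2P$ with $m$ odd.

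The engine is to realise $\Gamma$, or a spanning subgraph of it, as a union of closed ladders, and I would first dispose of the case $P \ge 2$. Fix one involution $t \in X$. For each pair $\{a_i, a_i^{-1}\}$ the graph $\Cay(G, \{a_i, a_i^{-1}, t\})$ is a disjoint union of closed ladders whose rungs are exactly the $t$-edges: circular ladders when $t \notin \langle a_i\rangle$ and M\"obius ladders when $t \in \langle a_i\rangle$. Taking $\mathcal{F}$ to be the family of all these ladder components over all $i$ and $E$ the set of all $t$-edges, the hypotheses of Theorem~\ref{cladder} hold: every $t$-edge is a rung of $P \ge 2$ distinct ladders, every other edge lies in a unique ladder, and every invalid ladder has at least two rungs because $|a_i| \ge 3$. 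Hence $\Cay(G, \{t\} \cup \bigcup_i \{a_i, a_i^{-1}\})$ admits a nowhere-zero $3$-flow. The remaining $m-1$ involutions (an even number) group into pairs, each giving an even-valency subgraph $\Cay(G, \{s, s'\})$ with a nowhere-zero $2$-flow. These blocks are pairwise edge-disjoint and cover $\Gamma$, so Lemma~\ref{union} (with no common edges) assembles a nowhere-zero $3$-flow on $\Gamma$.

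It then remains to treat $P \le 1$, where $m + 2P \ge 5$ forces $m \ge 3$. When $X$ contains three involutions $t_1, t_2, t_3$ with $t_1 t_2 t_3 \ne 1$, the graph $\Cay(G, \{t_1, t_2, t_3\})$ is a cubic bipartite spanning subgraph: each component, being a connected vertex-transitive cubic graph, is $3$-edge-connected, and it is bipartite because, writing the $2$-torsion additively, $t_1 + t_2 + t_3 \ne 0$ yields a homomorphism to $\Z_2$ sending each $t_i$ to $1$. Hence Lemma~\ref{regular} finishes the job. A short counting argument ($t_1+t_2+t_3 = t_1+t_2+t_4 = 0$ forces $t_3 = t_4$) shows such a triple always exists once $m \ge 4$; since $m$ is odd this leaves only $m = 3$ with $t_1 t_2 t_3 = 1$, forcing $P = 1$ and $|X| = 5$, i.e. $\Gamma \cong C_n \Box K_4$ with $n = |a|$. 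If $|a|$ is even one still finds a bipartite circular ladder $\Cay(G, \{a, a^{-1}, t_i\})$ with $t_i \notin \langle a\rangle$ as a cubic bipartite spanning subgraph, so the genuinely stubborn case is $|a|$ odd.

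The main obstacle is exactly this last configuration, $\Gamma \cong C_n \Box K_4$ with $n$ odd, where the three involutions are dependent ($t_3 = t_1 t_2$) and the $\{t_1,t_2,t_3\}$-subgraph is a disjoint union of copies of $K_4$, carrying no nowhere-zero $3$-flow. My way around it would exploit that $K_4$ has a second description as a Cayley graph, $K_4 \cong \Cay(\Z_4, \{1, -1, 2\}) = M_2$: as an abstract graph $\Gamma \cong \Cay(\Z_n \times \Z_4, \{(\pm 1, 0), (0, \pm 1), (0, 2)\})$, in which there is a single involution $(0,2)$ together with two non-involution pairs. This returns us to the $P \ge 2$ situation, so Theorem~\ref{cladder} (with the $(0,2)$-edges as shared rungs, the first pair giving circular ladders and the second M\"obius ladders $M_2$) produces a nowhere-zero $3$-flow, which is preserved under graph isomorphism. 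Thus the crux of the proof is to organise the odd-valency analysis so that, after possibly re-presenting the graph on a different abelian group, one always lands in a situation feeding either Theorem~\ref{cladder} or the bipartite-cubic clause of Lemma~\ref{regular}.
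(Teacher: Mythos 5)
The paper does not actually prove this lemma: it is imported wholesale from Poto\v cnik, \v Skoviera and \v Skrekovski \cite{PSS2005}, so there is no internal proof to compare against. Your argument is therefore a genuinely different route --- a self-contained derivation of the abelian case from the paper's own Section~\ref{sec:ladder} machinery (Theorem~\ref{cladder}, Lemmas~\ref{union}, \ref{regular} and \ref{ladder}). For connection \emph{sets} the skeleton is complete and correct: even valency is immediate; odd valency forces an involution; with two non-involution pairs the shared-rung application of Theorem~\ref{cladder} works (every closed ladder built from a pair $\{a,a^{-1}\}$ and an involution $t$ has at least two $t$-rungs since $|a|\ge 3$); the three-involution bipartite argument and the counting that produces a triple with $t_1t_2t_3\ne 1$ once there are at least four involutions are both right; and re-presenting $C_n\Box K_4$ as $\Cay(\Z_n\times\Z_4,\{(\pm1,0),(0,\pm1),(0,2)\})$ to manufacture a second non-involution pair is a clean way to dispose of the one stubborn configuration. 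What this buys is self-containedness; what the paper's citation buys is a black box on which the induction of Section~\ref{sec:proof} can bottom out without further work.

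The one concrete flaw is the parenthetical reduction of multisets to sets. The paper's Cayley graphs carry connection \emph{multisets}, and ``extra parallel edges only create $2$-cycles that are glued in harmlessly via Lemma~\ref{union}'' is false as stated: Lemma~\ref{union} requires \emph{every} piece of the decomposition to admit a nowhere-zero $3$-flow, and stripping duplicated elements can leave a cubic Cayley graph with none. For instance, on $G=\Z_3\times\Z_2$ with $X=\{(1,0),(2,0),(1,0),(2,0),(0,1)\}$ the underlying-set graph is $CL_3$, which admits no nowhere-zero $3$-flow, so the glueing cannot proceed. The repair is already inside your own argument: count the pairs $\{a,a^{-1}\}$ \emph{with multiplicity}, so that this example has $P=2$ and Theorem~\ref{cladder} applies with two parallel families of ladders sharing the $(0,1)$-rungs; a repeated involution together with another involution spans copies of $CL_2$ and is handled by Lemma~\ref{regular} in the same spirit. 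With that adjustment, and reading ``$\Gamma\cong C_n\Box K_4$'' as a statement about each connected component, the proof goes through.
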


The next lemma is a collection of a few known results on supersolvable groups (see, for example, \cite{R1995}).

\begin{lem}
\label{supersolvable}
Let $G$ be a supersolvable group. Then the following hold:
\begin{enumerate}
 \item the elements of $G$ with odd orders form a characteristic subgroup of $G$;
  \item the derived group $G'$ is nilpotent;
  \item every subgroup of $G$ is supersolvable, and for every normal subgroup $N$ of $G$ the quotient group $G/N$ is supersolvable;
  \item every minimal normal subgroup of $G$ is a cyclic group of prime order.
\end{enumerate}
\end{lem}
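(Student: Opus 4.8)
The plan is to prove each of the four statements by drawing on standard structure theory of supersolvable groups, treating them as essentially textbook facts assembled for later use. Since this is a ``collection of a few known results,'' the goal is to give clean, self-contained justifications rather than to develop new machinery.

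For part (i), I would consider the set $O(G)$ of elements of odd order. Because $G$ is supersolvable and hence solvable, $G$ has a normal Sylow subgroup structure amenable to induction; more directly, supersolvable groups are known to satisfy $G' \le \Phi(G)$-type nilpotency (see part (ii)), so $G'$ is nilpotent and the Sylow $2$-subgroup of $G'$ is normal in $G$. The cleanest route is to invoke that a supersolvable group has a Sylow tower: its Sylow subgroups can be ordered by decreasing prime so that the product of the Sylow subgroups for the odd primes forms a normal subgroup (this is the ``Sylow tower property'' of supersolvable groups, where the largest prime divisor gives a normal Sylow subgroup). The odd part is exactly the unique subgroup whose order is the odd part of $|G|$, and its uniqueness forces it to be characteristic. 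I would argue that $O(G)$ is closed under multiplication using the normal Hall $2'$-subgroup supplied by the Sylow tower, then note that any characteristic-defining property (being the set of odd-order elements) is automorphism-invariant, hence $O(G)$ is characteristic.

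For parts (ii)--(iv), I would simply cite the standard facts and sketch the one-line reasons: part (ii) is the classical theorem that the derived subgroup of a supersolvable group is nilpotent, which follows because $G/G'$ is abelian while the supersolvable chief series forces $G'$ to have all chief factors central, making $G'$ nilpotent; part (iii) is immediate since both ``being supersolvable'' is inherited by subgroups and by quotients (a cyclic chief series restricts and projects to cyclic series); and part (iv) follows because a minimal normal subgroup $N$ of a supersolvable group sits inside the supersolvable chief series, and in a supersolvable group every chief factor is cyclic of prime order, so the minimal normal subgroup itself, being a bottom chief factor above the trivial group, is cyclic of prime order.

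The main obstacle, such as it is, lies in part (i): making rigorous that the odd-order elements actually \emph{form a subgroup} rather than merely a set. For general groups this fails, so the argument must genuinely use supersolvability (via the Sylow tower / normal Hall $2'$-subgroup). I expect the cleanest presentation to establish the existence of a normal Hall subgroup $H$ of odd order, observe that every odd-order element lies in $H$ (by a counting/order argument, since $|H|$ is the full odd part of $|G|$ and any odd-order element generates an odd-order subgroup contained in some conjugate of $H$, but $H$ being normal means all such lie in $H$), and conversely every element of $H$ has odd order; then $O(G)=H$ is a subgroup, and its description purely in terms of element orders makes it characteristic. The remaining parts are genuinely routine citations, so no substantive difficulty is anticipated there.
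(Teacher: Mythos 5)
Your proposal is correct, but there is nothing in the paper to compare it against: the authors give no proof of this lemma at all, presenting it explicitly as ``a collection of a few known results on supersolvable groups'' with a bare citation to Robinson's book \cite{R1995}. Your sketches are the standard textbook arguments and they work. For (i), the Sylow tower route is the right one: supersolvability gives a normal Hall $2'$-subgroup $H$ (the primes being ordered decreasingly, with $2$ smallest), every odd-order element maps to the identity in the $2$-group $G/H$ and hence lies in $H$, every element of $H$ has odd order, and a subgroup defined purely by element orders is characteristic. One caution: your parenthetical claim that supersolvable groups satisfy ``$G'\le\Phi(G)$-type nilpotency'' is false as stated (e.g.\ $S_3$ is supersolvable with $G'=A_3$ but $\Phi(S_3)=1$); since you immediately abandon it for the Sylow tower argument it does no harm, but it should be deleted. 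Parts (ii)--(iv) are exactly the classical facts (the abelian automorphism group of a prime-order chief factor forces $G'$ to centralize every chief factor, hence $G'$ is nilpotent; cyclic chief series restrict and project; a minimal normal subgroup is a bottom chief factor), and citing them, as the paper does, is entirely adequate.
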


\begin{lem}
\label{derived}
Let $G$ be a group. If the derived group $G'$ is of square-free order, then $G$ is supersolvable.
\end{lem}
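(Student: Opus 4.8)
The plan is to argue by induction on $|G|$, letting the square-free hypothesis on $G'$ do the real work of collapsing a minimal normal subgroup down to a cyclic group of prime order. If $G' = 1$ then $G$ is abelian and hence supersolvable, so I may assume $G' \neq 1$. First I would record the one genuinely external input: a group of square-free order is solvable. Indeed, every Sylow subgroup of $G'$ has prime order (as $|G'|$ is square-free) and is therefore cyclic, so $G'$ is a $Z$-group and hence solvable; this is standard and may be cited from \cite{R1995}, or obtained directly by applying Burnside's normal $p$-complement theorem to the smallest prime divisor of $|G'|$. Since $G/G'$ is abelian, it follows that $G$ itself is solvable.

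Next I would extract a convenient normal subgroup. Among the nontrivial normal subgroups of $G$ contained in $G'$ — a nonempty family, since $G'$ itself qualifies — choose a minimal member $N$. Then $N$ is actually a minimal normal subgroup of $G$: any normal subgroup of $G$ lying strictly between $1$ and $N$ would again be contained in $G'$ and would contradict the minimality of $N$. Because $G$ is solvable, $N$ is elementary abelian, say $N \cong \Z_p^{\,d}$ for some prime $p$ and some $d \geq 1$. Here the hypothesis enters decisively: $N \leq G'$ and $|G'|$ is square-free, so a Sylow $p$-subgroup of $G'$ has order $p$; as $N$ is a nontrivial $p$-subgroup of $G'$, this forces $|N| = p$, so that $N$ is cyclic of prime order.

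The induction and the assembly of the series then proceed routinely. Passing to $\overline{G} = G/N$, we have $\overline{G}' = G'N/N = G'/N$ (using $N \leq G'$), which is of order $|G'|/p$ and hence again of square-free order; by the induction hypothesis $\overline{G}$ is supersolvable. To conclude, I would take a normal series of $\overline{G}$ with cyclic factors, pull it back through the correspondence theorem to a chain of subgroups of $G$, each normal in $G$ and containing $N$, with cyclic factors, and then prepend the segment $1 \trianglelefteq N$ (whose single factor $N$ is cyclic of prime order). This produces a normal series of $G$ with cyclic factors, establishing that $G$ is supersolvable.

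The only delicate point is the bookkeeping in the final step: each term of the assembled chain must be normal in \emph{all} of $G$, not merely subnormal. This is guaranteed because the bottom piece $N$ is normal in $G$ by construction, while the upper terms arise from normal subgroups of the quotient $\overline{G}$ and are therefore normal in $G$ by the correspondence theorem; the cyclicity of the factors is preserved under this correspondence. I expect this to be the main obstacle only in the sense of requiring care, the substantive content being the solvability of square-free-order groups combined with the square-free condition forcing $d = 1$. As an alternative that sidesteps the induction, one could note that the $Z$-group $G'$ has cyclic derived subgroup $G''$, which is characteristic in $G$, while $G'/G''$ is abelian of square-free order and hence cyclic; then $(G/G'')' = G'/G''$ is cyclic, and the elementary fact that a group with cyclic derived subgroup is supersolvable (subgroups of a cyclic normal subgroup are characteristic, hence normal in the ambient group) shows $G/G''$ is supersolvable, after which the same lifting argument finishes the proof.
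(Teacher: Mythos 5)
Your proof is correct, but it follows a genuinely different route from the paper's. The paper argues directly, without induction: it invokes \cite[10.1.10]{R1995} to say that $G'$, having all Sylow subgroups cyclic, is metacyclic, picks a cyclic normal subgroup $G_1\leq G'$ with $G'/G_1$ cyclic, observes that the square-free hypothesis makes $G_1$ a normal Hall subgroup of $G'$ and hence characteristic in $G'$ and normal in $G$, and then splices $1\leq G_1\leq G'$ onto the preimages of a cyclic series of the abelian group $G/G'$ to exhibit the required normal series explicitly. You instead induct on $|G|$: you use the solvability of groups of square-free order to conclude $G$ is solvable, take a minimal normal subgroup $N\leq G'$ (necessarily elementary abelian, and of order exactly $p$ because $|G'|$ is square-free), pass to $G/N$ whose derived subgroup $G'/N$ again has square-free order, and lift the series back. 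Both arguments ultimately rest on the same external input (Z-groups are metacyclic/solvable, i.e.\ Robinson 10.1.10 or Burnside), and both handle the normality bookkeeping correctly; the paper's version buys an explicit two-step series at the bottom with no induction, while yours isolates the conceptual point that the square-free condition forces minimal normal subgroups inside $G'$ to be of prime order, at the cost of an induction. Your closing alternative, via the cyclic characteristic subgroup $G''$ and the fact that a group with cyclic derived subgroup is supersolvable, is essentially the paper's argument with $G''$ playing the role of the paper's $G_1$.
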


\begin{proof}
Since $G/G'$ is abelian, it possesses a normal series $G'/G'=N_{0}/G'\leq N_{1}/G' \leq \cdots \leq N_{k}/G' =G/G'$ such that the quotient group $(N_{i}/G')/(N_{i-1}/G')$ is cyclic. Since $G'$ is of square-free order, every Sylow subgroup of $G'$ is cyclic. By \cite[10.1.10]{R1995}, $G'$ is a cyclic group or a metacyclic group. Therefore, $G'$ has a cyclic normal subgroup $G_{1}$ such that $G'/G_{1}$ is cyclic. Since $G'$ is of square-free order, the order of $G_{1}$ and the index of $G_{1}$ in $G'$ are coprime. It follows that $G_{1}$ is a characteristic subgroup of $G'$  and therefore a normal subgroup of $G$. Set $G_{i}=N_{i-2}G'$ for $2 \le i \le k+2$. Then we obtain a normal series $\{1\} = G_{0}\leq G_{1}\leq\cdots \leq G_{k+1}\leq G_{k+2}=G$ such that the quotient group $G_{i}/G_{i-1}$ is cyclic. Therefore, $G$ is supersolvable.
\end{proof}

\begin{lem}
\label{noncyclic}
Let $G$ be a supersolvable group with a noncyclic Sylow $2$-subgroup, and $N$ a normal subgroup of $G$ contained in $G'$. Then any Sylow $2$-subgroup of $G/N$ is noncyclic.
\end{lem}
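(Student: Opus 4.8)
The plan is to reduce the claim to a statement about the Frattini subgroup of a Sylow $2$-subgroup of $G$, and then to locate $N$ inside the derived subgroup of that Sylow using the subgroup of odd-order elements.

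First I would fix a Sylow $2$-subgroup $P$ of $G$; by hypothesis $P$ is noncyclic. Since the image of a Sylow $2$-subgroup under a surjective homomorphism is again a Sylow $2$-subgroup, $PN/N \cong P/(P\cap N)$ is a Sylow $2$-subgroup of $G/N$, so it suffices to show that $P/(P\cap N)$ is noncyclic. By the Burnside basis theorem, $P/\Phi(P)$ is elementary abelian and is noncyclic precisely because $P$ is noncyclic. Hence, if I can prove $P\cap N \le \Phi(P)$, then the quotient map $P/(P\cap N) \twoheadrightarrow P/\Phi(P)$ forces $P/(P\cap N)$ to be noncyclic, since a cyclic group has only cyclic quotients. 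As $P' \le \Phi(P)$ for any $2$-group, it is in fact enough to establish the containment $P\cap N \le P'$.

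The crux is to prove $P\cap N \le P'$, and here I would bring in the subgroup $O$ of odd-order elements, which is characteristic in $G$ by Lemma \ref{supersolvable}(i). The key observation is that $G/O$ is a $2$-group: if some $\bar g \in G/O$ had odd prime order $r$, then writing $g^r \in O$ of odd order $o$ would give $g^{ro}=1$ with $ro$ odd, so $g$ has odd order and $g \in O$, contradicting $\bar g \ne \bar 1$. Since $O$ has odd order (it contains no involution) and $G/O$ is a $2$-group, the canonical projection $\pi\colon G \to G/O$ restricts to an isomorphism $\pi|_P\colon P \to G/O$, because $P \cap O = 1$ and $|P| = |G/O|$.

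With this isomorphism in hand, the conclusion is short. For the surjection $\pi$ we have $\pi(G') = (G/O)'$, and since $\pi|_P$ is an isomorphism it carries $P'$ onto $(G/O)'$, that is, $(G/O)' = \pi(P')$. From $N \le G'$ we get $\pi(P\cap N) \le \pi(N) \le \pi(G') = \pi(P')$, and because $\pi|_P$ is injective with both $P\cap N$ and $P'$ lying in $P$, this yields $P\cap N \le P'$, completing the argument together with the second paragraph. The only genuinely nontrivial step, and the place where supersolvability enters, is showing that $G/O$ is a $2$-group so that $P \cong G/O$ via $\pi$; once the projection identifies $P$ with $G/O$, tracking $N \le G'$ into $P'$ and invoking the Frattini and Burnside machinery is routine.
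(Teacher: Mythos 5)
Your proof is correct and takes essentially the same route as the paper's: both hinge on the characteristic subgroup of odd-order elements from Lemma~\ref{supersolvable}(i), which identifies a Sylow $2$-subgroup with the quotient $G/O$, and on the Burnside basis theorem to conclude that the Frattini quotient of that Sylow subgroup is noncyclic. The only difference is one of bookkeeping: the paper shows $G'\le \Phi(Q)H$ and passes to the noncyclic $2$-group $G/\Phi(Q)H\cong Q/\Phi(Q)$, whereas you pull the same containment inside the Sylow subgroup as $P\cap N\le P'\le\Phi(P)$.
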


\begin{proof}
Let $H$ be the set of elements of $G$ with odd orders.
By Lemma \ref{supersolvable} (i), $H$ is a normal subgroup of $G$. Let $Q$ be a Sylow $2$-subgroup of $G$. Then $G=QH$ and $Q\cap H=\{1\}$. Therefore, $G/H\cong Q$. It follows that $\Phi(Q)H/H$ is the Frattini subgroup of $G/H$, where $\Phi(Q)$ is the Frattini subgroup of $Q$. In particular, $\Phi(Q)H$ is a normal subgroup of $G$. Since $Q$ is a noncyclic $2$-group, $Q/\Phi(Q)$ is an elementary abelian $2$-group of rank at least $2$ and hence is noncyclic. Since $G/\Phi(Q)H=QH/\Phi(Q)H\cong Q/\Phi(Q)$,  $G/\Phi(Q)H$ is a noncyclic abelian $2$-group. Therefore, $G'$ is contained in $\Phi(Q)H$. Since $N$ is contained in $G'$, $N$ is contained in $\Phi(Q)H$. Thus, since $(G/N)/(\Phi(Q)H/N)\cong G/\Phi(Q)H$ and $G/\Phi(Q)H$ is a noncyclic $2$-group, any Sylow $2$-subgroup of $G/N$ is noncyclic.
\end{proof}

The following lemma should be known (perhaps as a folklore or an exercise) in the mathematical community. Since we are unable to identify a reference to it, we give its proof here for completeness.

\begin{lem}\label{transversal}
Let $G$ be a group, and let $C$ and $H$ be subgroups of $G$ such that $C\cap H=\{1\}$. Then there exists a left transversal of $H$ in $G$ that contains $C$ as a subset.
\end{lem}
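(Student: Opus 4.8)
The plan is to exhibit a left transversal of $H$ in $G$ containing $C$ by building it up coset class by coset class, using the hypothesis $C\cap H=\{1\}$ only to guarantee that the elements of $C$ lie in pairwise distinct left cosets of $H$. First I would observe that the left cosets of $H$ partition $G$, and a left transversal is by definition a set containing exactly one element from each such coset. So the whole task reduces to the following: I must choose coset representatives in such a way that every element of $C$ gets chosen as the representative of its own coset.

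The key preliminary step is to verify that distinct elements of $C$ lie in distinct left cosets of $H$. Indeed, if $c_1, c_2 \in C$ satisfy $c_1 H = c_2 H$, then $c_2^{-1} c_1 \in H$; but $c_2^{-1} c_1 \in C$ as well since $C$ is a subgroup, so $c_2^{-1} c_1 \in C \cap H = \{1\}$, forcing $c_1 = c_2$. Hence the map $c \mapsto cH$ is injective on $C$, and the family $\{cH : c\in C\}$ consists of $|C|$ pairwise distinct cosets.

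With this in hand, the construction is direct. For each coset in the family $\{cH : c \in C\}$, I select the element $c \in C$ itself as the representative; this is unambiguous by the injectivity just established. For every remaining left coset of $H$ in $G$ — that is, every coset not of the form $cH$ for some $c \in C$ — I select an arbitrary element as its representative (invoking the axiom of choice if $G$ were infinite, though here $G$ is finite so this is immediate). Let $T$ be the set of all chosen representatives. By construction $T$ contains exactly one element from each left coset of $H$, so $T$ is a left transversal of $H$ in $G$; and $T \supseteq C$ because every element of $C$ was chosen as the representative of its own coset.

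I do not anticipate a serious obstacle here, as the statement is essentially a bookkeeping exercise once the injectivity of $c \mapsto cH$ on $C$ is noted; the only point requiring genuine (if minimal) care is precisely that injectivity, which is exactly where the hypothesis $C \cap H = \{1\}$ is used and without which the claim would fail. One should also confirm that no coset receives two representatives: the cosets $cH$ are disjoint from one another and disjoint from all the remaining cosets, so the two selection rules never conflict.
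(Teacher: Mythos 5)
Your proof is correct and follows essentially the same route as the paper: both arguments first use $C\cap H=\{1\}$ to show each left coset of $H$ contains at most one element of $C$, and then assemble the transversal by taking the elements of $C$ as representatives of the cosets they occupy and arbitrary representatives elsewhere. No gaps.
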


\begin{proof}
Let $gH$ be an arbitrary left coset of $H$ in $G$. Then for any $g\in G$ we have $|C\cap gH|\leq1$, for otherwise we would have $1 \neq c^{-1}c'\in C\cap H$ for distinct $c, c' \in C\cap gH$, which contradicts the assumption that $C\cap H=\{1\}$. So we can take  a left transversal $\{g_1,g_2,\ldots,g_m,g_{m+1},\ldots,g_{n}\}$ of $H$ in $G$ such that $|C\cap g_{i}H|=1$ for $1\leq i\leq m$ and $|C\cap g_{i}H|=0$ for $m+1\leq i\leq n$, where $n$ is the index of $H$ in $G$ and $m$ is some integer between $1$ and $n$. Set $C\cap g_{i}H=\{c_i\}$ for $1\leq i\leq m$, and let $A=\{c_1,c_2,\ldots,c_m,g_{m+1},\ldots,g_{n}\}$. Then $A$ is a left transversal of $H$ in $G$ such that $C\subseteq A$.
\end{proof}


\begin{lem}
\label{normal}
Let $\Cay(G,X)$ be a cubic connected Cayley graph on a group $G$ of order at least four. If $G$ contains a normal cyclic subgroup generated by a prime order element of $X$, then $\Cay(G,X)$ is a closed ladder with at least two rungs.
\end{lem}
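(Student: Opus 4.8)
The plan is to classify the connection multiset $X$ and the group $G$ completely, and then match the resulting graph against the definitions of circular and M\"obius ladders. Since $\Cay(G,X)$ is cubic we have $|X|=3$, and since $X=X^{-1}$ with matching multiplicities, $X$ is inversion-closed. Write $t$ for the given prime-order element of $X$ with $\langle t\rangle\trianglelefteq G$, and set $p=|t|$. The argument splits according to the parity of $p$. Throughout I would use that $G=\langle X\rangle$ (connectedness) and that $|G|\ge 4$ forces the number of rungs to be at least two, together with the fact recalled before Lemma~\ref{quotient} that the components of a valency-two Cayley graph $\Cay(G,Y)$ are translates of the cycle $\Cay(\langle Y\rangle,Y)$, so that their number is $[G:\langle Y\rangle]$.

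First I would treat $p$ odd. Then $t\ne t^{-1}$, so $\{t,t^{-1}\}\subseteq X$, and a short multiplicity count forces the remaining element $s$ to be an involution, i.e. $X=\{t,t^{-1},s\}$. Since $\langle t\rangle$ has no involution we get $s\notin\langle t\rangle$, and as $\langle t\rangle\trianglelefteq G$ with $s^2=1$ we have $sts^{-1}=t^{\pm1}$, whence $G=\langle t\rangle\langle s\rangle$ has order $2p$ and is isomorphic to $\Z_{2p}$ or to the dihedral group $D_{2p}$ of order $2p$. In either case $\langle t\rangle$ has index two, so the rail subgraph $\Cay(G,\{t,t^{-1}\})$ is the disjoint union of the two $p$-cycles on $\langle t\rangle$ and $s\langle t\rangle$, and the $s$-edges form a perfect matching between them. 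Comparing the adjacencies $t^i\leftrightarrow t^is$ with the definition of $CL_p$ shows $\Cay(G,X)\cong CL_p$; I would stress that \emph{both} the abelian and the dihedral group yield the prism (never a M\"obius ladder), precisely because there are genuinely two rail cycles. As $p\ge 3$, this is a circular ladder with at least two rungs.

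Next I would treat $p=2$. A normal subgroup of order two is central, so $t$ is a central involution; I would use $t$ as the rung generator and set $Y=X\setminus\{t\}$, a size-two inversion-closed submultiset, so that the $t$-edges form a perfect matching and $\Cay(G,Y)$ is two-regular. The map $g\mapsto gt$ is a fixed-point-free involutory automorphism of $\Cay(G,Y)$, and the number of rail cycles is $[G:\langle Y\rangle]\in\{1,2\}$ because $G=\langle Y\rangle\langle t\rangle$ with $t$ central of order two. When the index is two (this happens for $Y=\{a,a^{-1}\}$ with $t\notin\langle a\rangle$, giving $G\cong\Z_{|a|}\times\Z_2$, and for $Y=\{b,c\}$ a pair of distinct involutions with $t\notin\langle b,c\rangle$, giving $G\cong D_{2m}\times\Z_2$), the two rail cycles have equal length and the central involution matches corresponding vertices, producing the prism $CL_{|G|/2}$. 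When the index is one, $G=\langle Y\rangle$ is cyclic or dihedral, the rail subgraph is a single even cycle, and a direct computation shows that the central involution $t$ (namely $a^{|G|/2}$, respectively the central rotation $r^{m/2}$) joins each vertex to its antipode, so $\Cay(G,X)\cong M_{|G|/2}$. The only configurations not yet covered are those in which $t$ or the other involution occurs with multiplicity two in $X$; these force $G=\langle t,c\rangle\cong\Z_2\times\Z_2$ and $|G|=4$, and a direct inspection identifies the graph as $CL_2$. In every subcase $|G|\ge 4$ gives at least two rungs.

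The main obstacle I anticipate is not any individual computation but keeping the structural dichotomy clean: correctly reading off the number of rail cycles from $[G:\langle Y\rangle]$ and, crucially, distinguishing prism from M\"obius by showing that a central involution induces a \emph{parallel} matching between two rail cycles but an \emph{antipodal} matching on a single rail cycle. The point most easily gotten wrong is the odd case, where the non-abelian group $D_{2p}$ nevertheless yields the prism $CL_p$ rather than a M\"obius ladder; this must be verified from the explicit coset decomposition rather than inferred from the (non-)commutativity of $G$.
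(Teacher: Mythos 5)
Your proof is correct and follows essentially the same route as the paper's: a direct, exhaustive classification of the pairs $(G,X)$ followed by identification of the resulting graph as a circular or M\"obius ladder. You organize the cases by the parity of $p$ and by the index $[G:\langle Y\rangle]$ of the rail subgroup (with the parallel-versus-antipodal matching criterion deciding prism versus M\"obius), whereas the paper splits on whether $X$ contains a central involution and then simply lists the five possible presentations; the two enumerations cover the same configurations.
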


\begin{proof}
Denote $\Ga = \Cay(G,X)$. Since $\Ga$ is connected with valency three, we have $G=\langle X\rangle$ and $X$ is of cardinality $3$. Hence $X$ contains at least one involution. Let us first consider the case when $X$ contains at least one central involution of $G$. In this case, one of the following possibilities occurs:
\begin{itemize}
  \item $G=\langle y\rangle\times \langle z\rangle$, $X=\{y,z,z\}$, where both $y$ and $z$ are involutions, and $\Ga$ is isomorphic to $CL_{2}$;
  \item $G=\langle x,y\rangle$, $X=\{x,y,z\}$, where both $x$ and $y$ are involutions, $xy$ is of order $2n$ and $z=(xy)^{n}$ for some $n \ge 1$, and $\Ga$ is isomorphic to $M_{2n}$;
  \item $G=\langle x, y\rangle\times \langle z\rangle$,  $X=\{x,y,z\}$, where $x,y,z$ are all involutions and $xy$ is of order $n$ for some $n \ge 2$, and $\Ga$ is isomorphic to $CL_{2n}$;
  \item $G=\langle x\rangle\times \langle z\rangle$, $X=\{x,x^{-1},z\}$, where $x$ is of order $n>2$, $z$ is an involution, and $\Ga$ is isomorphic to $CL_{n}$;
  \item  $G=\langle x\rangle$, $X=\{x,x^{-1},z\}$, where $x$ is of order $2n$ and $z=x^{n}$ for some $n \geq 2$, and $\Ga$ is isomorphic to $M_{n}$.
\end{itemize}
In each possibility above, $\Ga$ is a closed ladder with at least two rungs, as desired.

Now we assume that $X$ contains no central involution of $G$. Then by our assumption there exists a normal cyclic subgroup $\langle x\rangle$ of $G$ with order an odd prime $p$ such that $x\in X$. It follows that $X=\{x,x^{-1}, z\}$, where $z$ is an involution. Therefore, $G=\langle x\rangle\rtimes \langle z\rangle$ is the dihedral group of order $2p$ and $\Ga$ is isomorphic to $CL_{p}$.
\end{proof}

Recall from the previous section that closed ladders are defined as certain Cayley graphs. Let $\Cay(G,X)$ be a closed ladder. We call an involution $z$ of $G$ the \emph{rung involution} for $\Cay(G,X)$ if $z \in X$ and for $g, h \in G$ there is a rung of $\Cay(G,X)$ joining $g$ and $h$ precisely when $g^{-1}h = z$.

\begin{lem}
\label{2close}
Let $\Cay(G,X)$ be a Cayley graph of valency at least five. Suppose that $X$ contains two inverse-closed submultisets $\{u_{1},u_{2}\}$, $\{v_{1},v_{2}\}$ and an involution $z$ such that $\Cay(\langle u_{1},u_{2},z\rangle,\{u_{1},u_{2}, z\})$ and $\Cay(\langle v_{1},v_{2},z\rangle,\{v_{1},v_{2}, z\})$ are both closed ladders with $z$ as their rung involution. Then $\Cay(G,X)$ admits a nowhere-zero $3$-flow.
\end{lem}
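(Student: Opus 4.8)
The plan is to dispose of the two parities of the valency $k := |X| \ge 5$ of $\Cay(G,X)$ separately. When $k$ is even, every vertex of $\Cay(G,X)$ has even valency, so by the criterion quoted just before Lemma~\ref{regular} (a graph has a nowhere-zero $2$-flow iff all its valencies are even) the graph admits a nowhere-zero $2$-flow and hence a nowhere-zero $3$-flow; nothing further is needed. The substance therefore lies in the case when $k$ is odd. Here I would single out the connection submultiset $Y = \{u_1,u_2,v_1,v_2,z\}$ of $X$ (it is inverse-closed, and by hypothesis it is a submultiset of $X$, so $|X|\ge 5$ leaves room for it) and consider the $5$-regular spanning subgraph $S := \Cay(G,Y)$. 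Since both $k$ and $5$ are odd, Lemma~\ref{regular} reduces the whole statement to proving that $S$ admits a nowhere-zero $3$-flow.

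To produce a nowhere-zero $3$-flow in $S$ I would invoke Theorem~\ref{cladder}. Put $K_1 = \langle u_1,u_2,z\rangle$ and $K_2 = \langle v_1,v_2,z\rangle$. Recalling that each connected component of a Cayley graph is a translate of the Cayley graph on the subgroup generated by its connection set, the subgraph $\Cay(G,\{u_1,u_2,z\})$ of $S$ is the disjoint union, over the left cosets $gK_1$, of translates of $\Cay(K_1,\{u_1,u_2,z\})$, each of which is by hypothesis a closed ladder whose rungs are precisely its $z$-edges; similarly $\Cay(G,\{v_1,v_2,z\})$ splits into translates of the closed ladder $\Cay(K_2,\{v_1,v_2,z\})$. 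I would then take $\mathcal{F}$ to be the collection of all these coset-translates of the two ladders and let $E$ be the set of all $z$-edges of $S$. Condition~(i) of Theorem~\ref{cladder} holds because the union of the two families recovers all $u$-, $v$- and $z$-edges, that is, all of $S$; condition~(ii) holds because every member of $\mathcal{F}$ is a closed ladder in which the $z$-edges are exactly the rungs, and a closed ladder has at least two rungs, so $|E(\Sigma)\cap E|\ge 2$ for every $\Sigma\in\mathcal{F}$ irrespective of whether $\Sigma$ is valid.

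The crux is condition~(iii). For a $z$-edge joining $g$ and $gz$, the containment $z\in K_1\cap K_2$ guarantees that this edge lies both in the coset $gK_1$ and in the coset $gK_2$, hence is a rung of one ladder from each of the two families, and these two members of $\mathcal{F}$ are distinct because one is assembled from $u$-edges and the other from $v$-edges. On the other hand, each rail edge of $S$ carries a single generator role ($u_i$ or $v_j$, and these are never $z$ since rails and rung are distinct in a closed ladder) and therefore belongs to exactly one coset-translate, namely a single member of one family. With all three conditions verified, Theorem~\ref{cladder} yields a nowhere-zero $3$-flow in $S$, and Lemma~\ref{regular} transfers it to $\Cay(G,X)$. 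The main obstacle I anticipate is precisely the bookkeeping behind condition~(iii): in the presence of possible coincidences among $u_1,u_2,v_1,v_2$ (and the resulting parallel edges of the multigraph $S$) one must argue carefully that the two ladders sharing a given $z$-edge remain genuinely \emph{distinct} subgraphs, while each rail edge stays private to a single ladder; treating the members of $Y$ as distinct multiset elements keeps this consistent.
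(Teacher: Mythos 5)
Your proposal is correct and follows essentially the same route as the paper's proof: dispose of the even-valency case via a nowhere-zero $2$-flow, pass to the $5$-regular spanning subgraph $\Cay(G,\{u_1,u_2,v_1,v_2,z\})$ via Lemma~\ref{regular}, and apply Theorem~\ref{cladder} with $\mathcal{F}$ the connected components of $\Cay(G,\{u_1,u_2,z\})$ and $\Cay(G,\{v_1,v_2,z\})$ and $E$ the set of $z$-edges. Your explicit verification of condition~(iii), including the care about multiset coincidences, is exactly the point the paper leaves as ``straightforward to verify.''
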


Note that in this lemma $\{u_{1},u_{2}\}$ and $\{v_{1},v_{2}\}$ are allowed to have one or two common elements and each of them can be a multiset (that is, we allow $u_1 = u_2$ for an involution $u_1$ and/or $v_1 = v_2$ for an involution $v_1$). However, $\{u_{1},u_{2}\}$ and $\{v_{1},v_{2}\}$ are treated as distinct submultisets of the multiset $X$.

\medskip
\begin{ProofL}\textbf{\ref{2close}.}~
Denote $\Gamma=\Cay(G,X)$. Since every regular graph of even valency admits a nowhere-zero $3$-flow, $\Gamma$ admits a nowhere-zero $3$-flow if $X$ is of even cardinality. Now assume that $X$ is of odd cardinality.  Set $U=\{u_{1},u_{2},z\}$, $V=\{v_{1},v_{2},z\}$, and $Y=\{u_{1},u_{2},v_{1},v_{2},z\}$.
Then $\Cay(G,Y)$ is a spanning subgraph of $\Gamma$ of valency five. Let $E$ be the set of edges of $\Cay(G,Y)$ joining $g$ and $gz$ for $g\in G$, and let
\begin{equation*}
 \mathcal{F}=\{\Sigma: \Sigma~\mbox{is a connected component of}~\Cay(G,U)~\mbox{or}~\Cay(G,V)\}.
\end{equation*}
Since both $U$ and $V$ are submultisets (or subsets) of $Y$, $\mathcal{F}$ is a family of subgraphs of $\Cay(G,Y)$. Since $\Cay(\langle U\rangle,U)$ and $\Cay(\langle V\rangle,V)$ are both closed ladders with $z$ as their rung involution, each member of $\mathcal{F}$ is a closed ladder with rungs in $E$. It is straightforward to verify that $\Sigma$, $E$ and $\mathcal{F}$ meet all conditions in Theorem \ref{cladder}. Therefore, by Theorem \ref{cladder}, $\Cay(G,Y)$ admits a nowhere-zero $3$-flow. Since $\Cay(G,Y)$ is a regular spanning subgraph of $\Ga$ with odd valency, it follows from Lemma \ref{regular} that $\Ga$ admits a nowhere-zero $3$-flow.
\qed
\end{ProofL}

\begin{lem}
\label{central}
{\rm (\cite[Theorem 3.3]{NS2009})}
Let $\Cay(G,X)$ be a Cayley graph of valency at least four such that $X$ contains a central involution. Then $\Cay(G,X)$ admits a nowhere-zero $3$-flow.
\end{lem}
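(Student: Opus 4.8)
The plan is to treat the easy even-valency case by hand and to reduce the odd-valency case to Lemma~\ref{2close}. Write $z$ for a central involution contained in $X$. If $|X|$ is even, then $\Cay(G,X)$ is regular of even valency and hence admits a nowhere-zero $2$-flow, which is a fortiori a nowhere-zero $3$-flow. So the whole difficulty lies in the case when $|X|$ is odd, whence $|X|\ge 5$.

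Assume then that $|X|$ is odd. The goal is to locate inside $X$ two inverse-closed $2$-element submultisets $\{u_1,u_2\}$ and $\{v_1,v_2\}$, occupying four distinct positions and disjoint from a chosen copy of $z$, such that $\Cay(\langle u_1,u_2,z\rangle,\{u_1,u_2,z\})$ and $\Cay(\langle v_1,v_2,z\rangle,\{v_1,v_2,z\})$ are closed ladders with $z$ as their rung involution. Once these are found, Lemma~\ref{2close} applies verbatim and yields a nowhere-zero $3$-flow in $\Cay(G,X)$; note in particular that Lemma~\ref{2close} absorbs all the remaining elements of $X$ (extra copies of $z$ included) at no cost.

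To produce such pairs I would use two prototypes, each of which is a closed ladder with rung involution $z$ precisely because $z$ is central: (a) a non-involution $x$ paired with $x^{-1}$, where $\langle x,z\rangle$ is abelian and $\Cay(\langle x,z\rangle,\{x,x^{-1},z\})$ is a circular ladder $CL_{n}$ when $z\notin\langle x\rangle$ and a M\"obius ladder $M_{n/2}$ when $z=x^{n/2}$ (here $n$ is the order of $x$); and (b) two involutions $y_1,y_2$, both distinct from $z$, where $\Cay(\langle y_1,y_2,z\rangle,\{y_1,y_2,z\})$ is a connected cubic Cayley graph containing the central involution $z$ in its connection set and hence, by the classification carried out in the proof of Lemma~\ref{normal}, a closed ladder with $z$ as rung. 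Let $X^{*}$ be the inverse-closed submultiset of $X$ consisting of all elements different from $z$; its non-involutions split into inverse-pairs of prototype~(a), while its involutions (which come in even number when $z$ has multiplicity one) group into pairs of prototype~(b). A short count shows that two disjoint such pairs can always be extracted provided $X^{*}$ contains at least four elements admitting such a grouping, and this holds in all but a few degenerate configurations, all of which have $|X^{*}|\le 3$ and hence force $z$ to occur with multiplicity at least two.

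The main obstacle is exactly the treatment of these finitely many degenerate configurations, where too few non-central generators are available to feed Lemma~\ref{2close}. When $X^{*}$ is empty the graph is a $2$-vertex multigraph with an odd number (at least five) of parallel edges, and when $X^{*}$ is a single inverse-pair or a single pair of involutions the graph is a closed ladder with thickened (multiple) rungs; in each of these I would simply write down an explicit nowhere-zero $3$-flow, sending flow $\pm 1$ around each rail cycle and assigning to the parallel rungs at each position nonzero values summing to zero (possible since there are at least two of them). The remaining and most delicate configurations are those in which $X^{*}$ consists of three involutions, or of one inverse-pair together with one involution: here the ambient graph is no longer a single ladder and the two-pair strategy breaks down, so one must argue separately, for instance by examining when the cubic subgraph spanned by the non-central generators is bipartite (its failure forces strong relations among the generators, e.g.\ $y_1y_2y_3=1$ makes $G$ abelian and hence reduces to Lemma~\ref{abelian}) and otherwise exploiting the multiple copies of $z$ directly. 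Confirming the closed-ladder structure underlying prototype~(b), together with this residual case analysis, is where the real work lies; the rest is the combinatorial matching argument and elementary parallel-edge reductions.
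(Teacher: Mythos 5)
First, a point of comparison: the paper does not prove this statement at all --- Lemma~\ref{central} is imported verbatim as \cite[Theorem 3.3]{NS2009}, and the authors explicitly say their argument \emph{relies} on it. So any proof here is ``different from the paper's''. Your plan to derive it from the paper's own machinery is legitimate and non-circular (Lemma~\ref{2close} rests only on Theorem~\ref{cladder}, Lemmas~\ref{union}, \ref{minus}, \ref{regular} and \ref{normal}, none of which use Lemma~\ref{central}), and the main line works: for odd $|X|\ge 5$, both prototypes do give closed ladders with $z$ as rung involution (because $z$ is central, the relevant cubic Cayley graphs fall into the cases listed in the proof of Lemma~\ref{normal} with the $z$-edges as rungs), and your parity count is right --- since non-involutions of $X^*$ come in inverse pairs, $|X^*|\ge 4$ always yields two disjoint admissible pairs, so the exceptional configurations are exactly those with $|X^*|\le 3$, which force $z$ to have multiplicity $m_z\ge 2$.

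The genuine gap is in the residual cases you flag as ``most delicate'' ($X^*$ three involutions, or an inverse pair plus one involution). The specific escape route you sketch does not work: non-bipartiteness of $\Cay(\langle y_1,y_2,y_3\rangle,\{y_1,y_2,y_3\})$ only gives \emph{some} odd relator in the $y_i$, not $y_1y_2y_3=1$, and even that relation would not make $G$ abelian (three reflections can multiply to the identity in a nonabelian group); ``exploiting the multiple copies of $z$ directly'' is left unexplained. Fortunately these cases close easily and uniformly: in every degenerate configuration either $m_z\ge 3$, in which case $\Cay(G,\{z,z,z\})$ is a cubic bipartite spanning subgraph (each component is $K_2$ with three parallel edges), or $m_z=2$ and $|X^*|=|X|-2\ge 3$ is odd, so $X^*$ contains an involution $w\ne z$; then $\langle w,z\rangle\cong\Z_2\times\Z_2$ and each component of $\Cay(G,\{w,z,z\})$ is a $4$-cycle with two doubled edges, hence cubic and bipartite. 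Either way Lemma~\ref{regular} finishes, and this also subsumes your hand-built flows for $|X^*|\le 2$. With that replacement your argument becomes a complete, self-contained proof; as written, the last case analysis is not.
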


\begin{lem}
\label{index}
Let $\Cay(G,X)$ be a connected simple Cayley graph of valency five. Suppose that $X=U\cup\{b,b^{-1},z\}$, where $U$ is an inverse-closed subset of $G$ consisting of two elements of even order, $z$ is an involution of $G$, and $b$ generates a normal subgroup $\langle b\rangle$ of $G$ with order greater than $2$. If $\langle U\rangle\cap\langle b\rangle=\{1\}$ and $C_{G}(\langle b\rangle)$ is of index at most $2$ in $G$, then $\Cay(G,X)$ admits a nowhere-zero $3$-flow.
\end{lem}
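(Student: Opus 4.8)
The plan is to realize $\Cay(G,X)$ as the union of two cubic spanning subgraphs that overlap only in the $z$-edges, and then to finish with either Theorem~\ref{cup} or Lemma~\ref{2close}. Writing $U=\{u,u^{-1}\}$ with $u$ of even order $\ge 4$ (the case where $U$ consists of two involutions is analogous), I would set $\Lambda=\Cay(G,U\cup\{z\})$ and $\Sigma=\Cay(G,\{b,b^{-1},z\})$. Since each of these contains all the $z$-edges, we have $\Lambda\cup\Sigma=\Cay(G,X)$, and the common edges of $\Lambda$ and $\Sigma$ are exactly the $z$-edges. Each connected component of $\Sigma$ is a copy of $\Cay(\langle b,z\rangle,\{b,b^{-1},z\})$ and each component of $\Lambda$ a copy of $\Cay(\langle U,z\rangle,U\cup\{z\})$, so the whole problem reduces to understanding these two cubic Cayley graphs and how the $z$-edges sit inside them.

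First I would dispose of a degenerate case. If $z\in\langle b\rangle$, then $z=b^{|b|/2}$ with $|b|$ even; since $\langle b\rangle\trianglelefteq G$, conjugation by any $g\in G$ acts on $\langle b\rangle$ as an automorphism $b\mapsto b^{c}$ with $c$ odd, and such an automorphism fixes the unique involution $b^{|b|/2}$. Hence $b^{|b|/2}=z$ is a central involution lying in $X$, and Lemma~\ref{central} finishes the proof. So I may assume $z\notin\langle b\rangle$, whence $\langle b,z\rangle=\langle b\rangle\rtimes\langle z\rangle$.

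The $\Sigma$-side is controlled by normality together with the centralizer hypothesis: because $\langle b\rangle\trianglelefteq G$ and $[G:C_G(\langle b\rangle)]\le 2$, the involution $z$ acts on $\langle b\rangle$ as an automorphism of order dividing $2$. When this automorphism is trivial or inversion, a direct coset computation identifies $\Cay(\langle b,z\rangle,\{b,b^{-1},z\})$ with the prism $CL_{|b|}$ (the dihedral prism in the inversion case), so each component of $\Sigma$ is a closed ladder with $z$ as its rung involution; in particular this always holds when $\langle b\rangle$ has prime order, since then $\Aut(\langle b\rangle)$ is cyclic and its only involution is inversion. The $\Lambda$-side is controlled by the even order of $u$: if $z$ centralizes or inverts $u$, the same computation yields $CL_{|u|}$ (a \emph{valid} closed ladder, since $|u|$ is even) or a M\"obius ladder, again a closed ladder with rung $z$; otherwise I would show, using that $|u|$ is even, that there is a homomorphism $\langle U,z\rangle\to\Z_2$ sending both $u$ and $z$ to the nontrivial element (equivalently, that the image of $u$ survives in the mod-$2$ abelianization of $\langle U,z\rangle$), so the cubic graph $\Cay(\langle U,z\rangle,U\cup\{z\})$ is bipartite and hence admits a nowhere-zero $3$-flow.

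With both sides understood I would conclude as follows. If the $\Lambda$-components admit nowhere-zero $3$-flows, apply Theorem~\ref{cup} with $\Lambda$ as the flow-carrying subgraph and the components of $\Sigma$ as the family of generalized closed ladders: every such component meets $\Lambda$ only in $z$-edges, which are precisely its rungs, so conditions (i)--(ii) hold and $\Cay(G,X)$ acquires a nowhere-zero $3$-flow. If instead a $\Lambda$-component is an \emph{invalid} closed ladder, then both families consist of closed ladders sharing $z$ as rung involution, and Lemma~\ref{2close} applies directly. The main obstacle is the structural verification of the previous paragraph: pinning down exactly when $\Cay(\langle b,z\rangle,\{b,b^{-1},z\})$ and $\Cay(\langle U,z\rangle,U\cup\{z\})$ are genuine circular or M\"obius ladders rather than ``twisted prisms'', and showing that in all remaining cases they are bipartite. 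This is where all four hypotheses — normality of $\langle b\rangle$, the index-at-most-$2$ centralizer condition, the even orders in $U$, and $\langle U\rangle\cap\langle b\rangle=\{1\}$ — are genuinely needed, and the most delicate point is the $\Lambda$-side, since $\langle U\rangle$ is not assumed to be normal in $\langle U,z\rangle$.
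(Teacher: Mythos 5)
Your decomposition $\Lambda=\Cay(G,U\cup\{z\})$, $\Sigma=\Cay(G,\{b,b^{-1},z\})$ with the $z$-edges as the overlap has a genuine gap on the $\Lambda$-side: nothing in the hypotheses constrains the group $\langle U,z\rangle$, and the components $\Cay(\langle U,z\rangle,U\cup\{z\})$ need be neither closed ladders with rung involution $z$ nor bipartite. Concretely, take $G=S_4\times\Z_5$, let $b$ generate the $\Z_5$ factor, and put $U=\{(12),(23)\}$, $z=(13)(24)$. Then $\langle b\rangle$ is normal of order $5>2$, $C_G(\langle b\rangle)=G$, $\langle U\rangle\cap\langle b\rangle=\{1\}$, and $X=U\cup\{b,b^{-1},z\}$ generates $G$, so every hypothesis of the lemma is satisfied. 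But $\langle U,z\rangle=S_4$ and $z\in A_4=S_4'$, so every homomorphism $S_4\to\Z_2$ kills $z$ and the proposed homomorphism does not exist; the component $\Cay(S_4,\{(12),(23),(13)(24)\})$ is a connected cubic non-bipartite ($3$-edge-connected) graph and hence admits \emph{no} nowhere-zero $3$-flow, and it is not $CL_{12}$ or $M_{12}$ (its girth exceeds $4$, since none of $(12)(23)$, $(12)(13)(24)$, $(23)(13)(24)$ squares to the identity). So Theorem~\ref{cup} cannot be applied with your $\Lambda$ as the flow-carrying subgraph, and Lemma~\ref{2close} is unavailable because $\Cay(\langle u_1,u_2,z\rangle,\{u_1,u_2,z\})$ is not a closed ladder. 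A milder instance of the same ``twisted prism'' problem occurs on the $\Sigma$-side when $|b|$ is odd and composite: $z$ may act on $\langle b\rangle$ as a non-inversion involution of $\Aut(\langle b\rangle)$, and the component is then a non-bipartite $I$-graph rather than $CL_{|b|}$; the paper removes this case first by quotienting by a nontrivial proper subgroup of $\langle b\rangle$, a reduction your outline omits.

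The paper's proof sidesteps $\langle U,z\rangle$ entirely by building the ladders the other way around. After disposing of even $|b|$ (centrality of $z$, or bipartiteness of $\Cay(G,\{b,b^{-1},z\})$) and reducing to $|b|=p=2n+1$ prime, it takes as generalized closed ladders the subgraphs induced on the coset pairs $b^{2i-1}\langle U\rangle\cup b^{2i}\langle U\rangle$: these are copies of $CL_{2m}$ with the $b$-edges as rungs and the $U$-edges as rails, and since $2m=|\langle U\rangle|$ is even they always admit nowhere-zero $3$-flows. The flow-carrying subgraph $\Lambda$ is then obtained from the $CL_{p}$-components of $\Cay(G,\{b,b^{-1},z\})$ by deleting exactly the $b$-edges used as rungs above, which turns each component into an open ladder $L_{p}$ or a one-rung generalized M\"obius ladder, and Theorem~\ref{cup} is applied to this pair. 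Some such rearrangement is necessary; the $z$-edge overlap you propose cannot be repaired within your framework.
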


\begin{proof}
Denote $\Gamma=\Cay(G,X)$. Since $\langle b\rangle$ is a cyclic normal subgroup of $G$, every subgroup of $\langle b\rangle$ is normal in $G$. If $b$ is of even order and $z\in\langle b\rangle$, then $z$ is a central involution of $G$. By Lemma \ref{central}, $\Gamma$ admits a nowhere-zero $3$-flow. If $b$ is of even order and $z\notin\langle b\rangle$, then $\langle b,z\rangle$ is a semidirect product of $\langle b\rangle$ and $\langle z\rangle$, and $bz=zb^{r}$ for some odd integer $r$. Therefore, the Cayley graph $\Cay(\langle b,z\rangle,\{b,b^{-1},z\})$ is a cubic bipartite graph with bipartition $\{\langle b^{2}\rangle\cup\langle b^2\rangle bz, \langle b^{2}\rangle z\cup\langle b^2\rangle b\}$. Since every connected component of $\Cay(G,\{b,b^{-1},z\})$ is isomorphic to $\Cay(\langle b,z\rangle,\{b,b^{-1},z\})$, $\Cay(G,\{b,b^{-1},z\})$ is a spanning cubic bipartite subgraph of $\Gamma$. By Lemma \ref{regular}, $\Gamma$ admits a nowhere-zero $3$-flow.

In the sequel we assume that $b$ is of odd order, say, $2n+1$. Then $z\notin \langle b\rangle$ and $\langle b,z\rangle$ is a semidirect product of $\langle b\rangle$ and $\langle z\rangle$. Suppose that $2n+1$ is not a prime. Then $\langle b\rangle$ contains a nontrivial proper subgroup. Let $N$ be such a subgroup of $\langle b\rangle$. Since $\langle b\rangle$ is normal in $G$, $N$ is normal in $G$. Clearly, $U$ is a connection set of $G$ with cardinality $2$. Since $U$ consists of two elements of even order and $z$ is an involution, $\langle U\rangle$ is of even order and $X\cap N=\emptyset$. By Lemma \ref{quotient}, $\Gamma$ admits a nowhere-zero $3$-flow if $\Cay(G/N,X/N)$ does. It remains to deal with the case when $2n+1$ is a prime (that is, $\langle b\rangle$ has no nontrivial proper subgroups).

From now on we assume that $2n+1$ is a prime. As mentioned above, the order of $\langle U\rangle$ is even, which we denote by $2m$. Since $C_{G}(\langle b\rangle)$ is of index at most $2$ in $G$, we have $g^{-2}bg^{2}=b$ for any $g\in G$. It follows that $g^{-1}bg=b~\mbox{or}~b^{-1}$. In particular, since $z$ is an involution, $\langle b,z\rangle$ is either an abelian group or a dihedral group. Since $U$ is a connection set of $G$ consisting of two elements of even order and $\langle U\rangle$ is of order $2m$, $\Cay(\langle U\rangle,U)$ is isomorphic to the cycle $C_{2m}$ of length $2m$. We will construct two subgraphs $\Sigma$, $\Lambda$ of $\Gamma$ satisfying the conditions in Theorem \ref{cup} and thus complete the proof by invoking this result.

\smallskip
\textsf{Step 1.} Construction of $\Sigma$.
\smallskip

Let $A$ be a left transversal of $\langle U,b\rangle$ in $G$. Then each left coset of $\langle U,b\rangle$ in $G$ is of the form $a \langle U,b\rangle$, where $a\in A$, and the subgraphs of $\Cay(G, U\cup\{ b, b^{-1}\})$ induced by these left cosets are precisely the connected components of $\Cay(G, U\cup\{ b, b^{-1}\})$. Denote by $\Theta_a$ the component of $\Cay(G, U\cup\{ b, b^{-1}\})$ induced by $a \langle U,b\rangle$, where $a \in A$. Then $\Theta_a$ is a translation of $\Theta_{1} := \Cay(\langle U,b\rangle, U\cup\{ b, b^{-1}\})$.
Since $\langle b\rangle$ is a cyclic normal subgroup of $G$ of order $2n+1$ and $\langle U\rangle\cap\langle b\rangle=\{1\}$, we have $\langle U,b\rangle=\langle b\rangle\langle U\rangle=\cup_{j=0}^{2n}b^{j}\langle U\rangle$ and $b^{i}\langle U\rangle\cap b^{j}\langle U\rangle=\emptyset$ for $i\neq j$. Denote by $\Theta_1^{(0)}$ the induced subgraph of $\Theta_1$ on $\langle U\rangle$, and denote by $\Theta_1^{(i)}$ the induced subgraph of $\Theta_1$ on $b^{2i-1}\langle U\rangle\cup b^{2i}\langle U\rangle$ for $1 \le i \le n$. Then $\Theta_1^{(0)},\Theta_1^{(1)},\ldots,\Theta_1^{(n)}$ are pairwise vertex-disjoint. Note that $\Theta_1^{(0)}$ is the Cayley graph $\Cay(\langle U\rangle,U)$, which is isomorphic to the cycle $C_{2m}$. Note also that $\{b^{2i-1}h,b^{2i}h\}$ is an edge of $\Theta_1^{(i)}$ for every $h\in \langle U\rangle$ as $(b^{2i-1}h)^{-1}(b^{2i}h)=h^{-1}b^{-(2i-1)}b^{2i}h=h^{-1}bh
=b~\mbox{or}~b^{-1}$. (Recall that $g^{-1}bg=b~\mbox{or}~b^{-1}$ for any $g\in G$.)
It follows that $\Theta_1^{(i)}$ is isomorphic to the circular ladder $CL_{2m}$ with rung set $R_{1}^{(i)}:=\big\{\{b^{2i-1}h,b^{2i}h\}\mid h\in \langle U\rangle\big\}$ and rail edges of the form $\{g,gy\}$, where $g\in b^{2i-1}\langle U\rangle\cup b^{2i}\langle U\rangle$ and $y\in U$. Denote by $\Theta_a^{(i)}$ the translation of $\Theta_1^{(i)}$ by $a$ for $0 \le i \le n$ and $a\in A$. Then we obtain a family
\begin{equation*}
 \mathcal{F}=\big\{\Theta_a^{(i)}\mid a\in A, 0 \le i \le n \big\}
\end{equation*}
of vertex-disjoint graphs each of which is isomorphic to either $C_{2m}$ or $CL_{2m}$. Of course, $\mathcal{F}$ is a family of edge-disjoint generalized closed ladders. Let $R_{a}^{(i)}$ be the translation of $R_{1}^{(i)}$ by $a$, for $a\in A$.
Set
$$
\Sigma=\cup_{\Theta\in \mathcal{F}}\Theta
$$
and
$$
R_{a}=\cup_{i=1}^{n}R_{a}^{(i)},\; R=\cup_{a\in A}R_{a}.
$$
Then $\Sigma=\Cay(G,U)\cup R$ and the construction above ensures that the connected components of $\Sigma$ are the graphs $\Theta\in \mathcal{F}$. Noting that $b^{j}h=hb^{j}$ or $b^{j}h=hb^{-j}=hb^{2n+1-j}$ for $1 \le j \le 2n$ and $h\in\langle U\rangle$, we have $\{ab^{2i-1}h,ab^{2i}h\}=\{ahb^{2i-1},ahb^{2i}\}$ or $\{ab^{2i-1}h,ab^{2i}h\}=\{ahb^{2(n-i+1)},ahb^{2(n-i+1)-1}\}
=\{ahb^{2(n-i+1)-1},ahb^{2(n-i+1)}\}$ for $\{ab^{2i-1}h,ab^{2i}h\}\in R_{a}^{(i)}$.
It follows that
\begin{equation*}
R_{a}=\big\{\{ahb^{2i-1},ahb^{2i}\}\mid h\in \langle U\rangle, 1 \le i \le n\big\}
\end{equation*}
and hence
$$
R=\big\{\{gb^{2i-1},gb^{2i}\}\mid g\in A\langle U\rangle, 1 \le i \le n\big\}.
$$

\smallskip
\textsf{Step 2.} Construction of $\Lambda$.
\smallskip

Since $\langle b\rangle$ is normal in $G$, we have $\langle U,b\rangle=\langle U\rangle\langle b\rangle$. Since $A$ is a left transversal of $\langle U, b\rangle$ in $G$ and $\langle U\rangle\cap\langle b\rangle=\{1\}$, $A\langle U\rangle$ is a left transversal of $\langle b\rangle$ in $G$. Therefore, there exist a permutation $\mu$ on $A\langle U\rangle$ and a mapping $\lambda$ from $A\langle U\rangle$ to $\{0,1,\ldots,2n\}$ such that $gz=\mu(g)b^{\lambda(g)}$ for $g\in A\langle U\rangle$. Since $z\notin\langle b\rangle$, we have $\mu(g)\neq g$. So $\mu$ has no fixed elements. Noting that $\mu(g)z=gb^{-\lambda(g)}~\mbox{or}~gb^{\lambda(g)}$, we have $\mu^{2}(g)=g$. Thus, $\mu$ is an involution. Let $T$ be a subset of $A\langle U\rangle$ with maximum cardinality such that $T\cap \mu(T)=\emptyset$. Since $\mu$ is an involution on $A\langle U\rangle$ fixing  no elements, we have $A\langle U\rangle=T\cup \mu(T)$. For each $g\in T$, denote by $\Lambda_{g}$ the induced subgraph of $\Cay(G,\{b,b^{-1},z\})$ on $g\langle b\rangle\cup \mu(g)\langle b\rangle$. Then $\Lambda_{g}$ is a connected component of $\Cay(G,\{b,b^{-1},z\})$, and moreover $\cup_{g\in T}\Lambda_{g} = \Cay(G,\{b,b^{-1},z\})$. Furthermore, $\Lambda_{g}$ is a translation of $\Cay(\langle b,z\rangle,\{b,b^{-1},z\})$ and hence is isomorphic to the circular ladder $CL_{2n+1}$. If $\lambda(g)$ is odd, then we set
\begin{equation*}
  \Lambda'_{g}=
     \begin{cases}
    \Lambda_g-\bigcup_{i=1}^n \big\{\{gb^{2i-1},gb^{2i}\},\{\mu(g)b^{2i-1},\mu(g)b^{2i}\}\big\},
    & \mbox{if}~\lambda(g)=1 \\
    \Lambda_g -\big\{\{gb^{2n+2-\lambda(g)},gb^{2n+3-\lambda(g)}\},
    \{\mu(g)b,\mu(g)b^{2}\}\big\}, &
    \mbox{if}~\lambda(g)\neq1~\mbox{and}~z^{-1}bz=b, \\
     \Lambda_g -\big\{\{gb,gb^{2}\},\{\mu(g)b^{\lambda(g)-2},\mu(g)b^{\lambda(g)-1}\}\big\}, &
    \mbox{if}~\lambda(g)\neq1~\mbox{and}~z^{-1}bz=b^{-1}. \\
     \end{cases}
\end{equation*}
If $\lambda(g)$ is even, then we set
\begin{equation*}
  \Lambda'_{g}=
     \begin{cases}
     \Lambda_g-\bigcup_{i=1}^n \big\{\{gb^{2i-1},gb^{2i}\},\{\mu(g)b^{2i-1},\mu(g)b^{2i}\}\big\},
      & \mbox{if}~\lambda(g)=2n \\
    \Lambda_g - \big\{\{gb,gb^{2}\},\{\mu(g)b^{\lambda(g)+1},\mu(g)b^{\lambda(g)+2}\}\big\}, &
    \mbox{if}~\lambda(g)\neq2n~\mbox{and}~z^{-1}bz=b, \\
     \Lambda_g - \big\{\{gb^{2n-1},gb^{2n}\},\{\mu(g)b^{\lambda(g)+1},\mu(g)b^{\lambda(g)+2}\}\big\}, &
    \mbox{if}~\lambda(g)\neq2n~\mbox{and}~z^{-1}bz=b^{-1}. \\
     \end{cases}
\end{equation*}
It can be verified that if $\lambda(g)\in\{1,2n\}$ then $\Lambda'_{g}$ is a subdivision of a generalized M\"obius ladder with a unique rung, and if $\lambda(g)\notin\{1,2n\}$ then $\Lambda'_{g}$ is isomorphic to $L_{2n+1}$. (See Figure \ref{fig:lambda} for two spacial cases where $z^{-1}bz=b^{-1}$, $n=3$, and $\lambda(g)$ is $6$ or $5$.) It follows that $\Lambda'_{g}$ admits a nowhere-zero $3$-flow. Since $A\langle U\rangle=T\cup\mu(T)$ is a left transversal of $\langle b\rangle$ in $G$ and $Tz\langle b\rangle=\mu(T)\langle b\rangle$,
$T\cup Tz$ is a left transversal of $\langle b\rangle$ in $G$. Therefore, $T$ is a left transversal of $\langle b,z\rangle$ in $G$. It follows that $\Lambda'_{g}$ and $\Lambda'_{h}$ have no common vertices for every pair of distinct $g,h\in T$. Now we set
$$
\Lambda:=\cup_{g\in T}\Lambda'_{g}.
$$
Then $\Lambda$ is the union of $|T|$ edge-disjoint subgraphs of $\Ga$ each admitting a nowhere-zero $3$-flow. By Lemma \ref{union}, we know that $\Lambda$ admits a nowhere-zero $3$-flow.

\begin{figure}[h]
\centering
\begin{tikzpicture}
\tikzstyle{every node}=[draw,shape=circle,label distance=-0.5mm,inner sep=1pt];
\node (0) at (-5,6) [fill,label=above left:\small{$g$}] {};
\node (1) at (-5,5) [fill,label=left:\small{$gb$}] {};
\node (2) at (-5,4) [fill,label=left:\small{$gb^{2}$}] {};
\node (3) at (-5,3) [fill,label=left:\small{$gb^{3}$}] {};
\node (4) at (-5,2) [fill,label=left:\small{$gb^{4}$}] {};
\node (5) at (-5,1) [fill,label=left:\small{$gb^{5}$}] {};
\node (6) at (-5,0) [fill,label=below left:\small{$gb^{6}$}] {};
\node (10) at (-3.75,6) [fill,label=above right:\small{$\mu(g)b^{6}$}] {};
\node (11) at (-3.75,5) [fill,label=right:\small{$\mu(g)b^{5}$}] {};
\node (12) at (-3.75,4) [fill,label=right:\small{$\mu(g)b^{4}$}] {};
\node (13) at (-3.75,3) [fill,label=right:\small{$\mu(g)b^{3}$}] {};
\node (14) at (-3.75,2) [fill,label=right:\small{$\mu(g)b^{2}$}] {};
\node (15) at (-3.75,1) [fill,label=right:\small{$\mu(g)b$}] {};
\node (16) at (-3.75,0) [fill,label=below right:\small{$\mu(g)$}] {};
\draw
(0) -- (1)(2)--(3) (4) -- (5)  (6)..controls (-7.5,0) and (-7.5,6)..(0)
(0) -- (10) (1) -- (11) (2) -- (12) (3) -- (13) (4) -- (14) (5) -- (15) (6) -- (16)
(11) -- (12)(13) -- (14)(15) -- (16)..controls (-1.25,0) and (-1.25,6)..(10);
\end{tikzpicture}
\begin{tikzpicture}
\tikzstyle{every node}=[draw,shape=circle,label distance=-0.5mm,inner sep=1pt];
\node (0) at (3.75,6) [fill,label=above left:\small{$g$}] {};
\node (1) at (3.75,5) [fill,label=left:\small{$gb$}] {};
\node (2) at (3.75,4) [fill,label=left:\small{$gb^{2}$}] {};
\node (3) at (3.75,3) [fill,label=left:\small{$gb^{3}$}] {};
\node (4) at (3.75,2) [fill,label=left:\small{$gb^{4}$}] {};
\node (5) at (3.75,1) [fill,label=left:\small{$gb^{5}$}] {};
\node (6) at (3.75,0) [fill,label=below left:\small{$gb^{6}$}] {};
\node (10) at (5,6) [fill,label=above right:\small{$\mu(g)b^{5}$}] {};
\node (11) at (5,5) [fill,label=right:\small{$\mu(g)b^{4}$}] {};
\node (12) at (5,4) [fill,label=right:\small{$\mu(g)b^{3}$}] {};
\node (13) at (5,3) [fill,label=right:\small{$\mu(g)b^{2}$}] {};
\node (14) at (5,2) [fill,label=right:\small{$\mu(g)b$}] {};
\node (15) at (5,1) [fill,label=right:\small{$\mu(g)$}] {};
\node (16) at (5,0) [fill,label=below right:\small{$\mu(g)b^{6}$}] {};
\draw
(0)--(1)(2) -- (3) -- (4) -- (5)-- (6)..controls (1.25,0) and (1.25,6)..(0)
(0) -- (10) (1) -- (11) (2) -- (12) (3) -- (13) (4) -- (14) (5) -- (15) (6) -- (16)
(10)--(11) (12) -- (13) -- (14) -- (15)-- (16)..controls (7.5,-0) and (7.5,6)..(10);
\end{tikzpicture}
\caption{The subgraphs $\Lambda'_{g}$, $g \in T$ of $\Ga$ in the spacial cases when $z^{-1}bz=b^{-1}$, $n=3$, and $\lambda(g)$ is $6$ or $5$.
\label{fig:lambda}}
\end{figure}
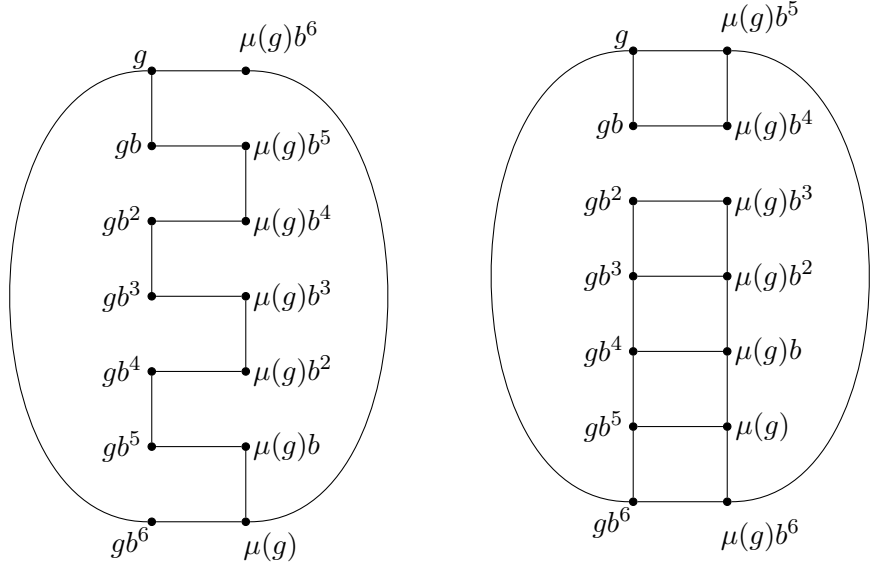

\smallskip
\textsf{Step 3.} Completing the proof.
\smallskip

In summary, we have constructed two subgraphs $\Lambda$, $\Sigma$ of $\Gamma$ satisfying the following conditions:
\begin{itemize}
  \item $\Lambda$ admits a nowhere-zero $3$-flow;
  \item $\Sigma=\cup_{\Theta\in \mathcal{F}}\Theta$, where $\mathcal{F}$ is a family of edge-disjoint generalized closed ladders, each of which is isomorphic to $C_{2m}$ or $CL_{2m}$ and is a connected component of $\Sigma$;
  \item for each $\Theta\in \mathcal{F}$, every common edge of $\Theta$ and $\Lambda$, if it exists, is a rung of $\Theta$.
\end{itemize}
Since by Lemma \ref{ladder}, $CL_{2m}$ admits a nowhere-zero $3$-flow, we see that every $\Theta \in \mathcal{F}$ admits a nowhere-zero $3$-flow. Therefore, $\Lambda$ and $\Sigma$ satisfy the conditions in Theorem \ref{cup}. Thus, by Theorem \ref{cup}, $\Sigma\cup\Lambda$ admits  a nowhere-zero $3$-flow.
Recall that $\Sigma=\Cay(G,U)\cup R$, where $R=\big\{\{gb^{2i-1},gb^{2i}\}\mid g\in A\langle U\rangle, 1 \le i \le n\big\}$. On the other hand, we see from the construction of $\Lambda$ that $\Cay(G,U\cup\{b,b,^{-1}z\})-R$ is a subgraph of $\Lambda$. Therefore, $\Gamma=\Sigma\cup\Lambda$ and we conclude that $\Gamma$ admits a nowhere-zero $3$-flow.
\end{proof}

\section{Proof of Theorems \ref{Cayley} and \ref{Cayley1}}
\label{sec:proof}

Now we are ready to prove our main results. As it turns out, it is convenient for us to include the case of Cayley graphs on nilpotent groups in our proof. However, this does not mean that our proof gives an independent proof of \cite[Theorem 4.3]{NS2009} as it relies on \cite[Theorem 3.3]{NS2009} (Lemma \ref{central}).

\medskip
\Proofs \textbf{\ref{Cayley} and \ref{Cayley1}.}~
Let $G$ be a group satisfying one of the following conditions:
\begin{enumerate}
   \item $G$ is nilpotent;
  \item $G$ is supersolvable with a noncyclic Sylow $2$-subgroup;
  \item the derived subgroup $G'$ of $G$ is of square-free order.
\end{enumerate}
Since every finite nilpotent group is supersolvable (see \cite[5.4.6]{R1995}) and any group satisfying (iii) is supersolvable (Lemma \ref{derived}), we see that $G$ is supersolvable. Clearly, if $G$ satisfies (iii), then every subgroup of $G$ satisfies (iii). Thus, to prove Theorem \ref{Cayley1}, it suffices to consider connected Cayley graphs on $G$ satisfying (iii), for otherwise we consider their connected components. Also, the Cayley graphs in Theorem \ref{Cayley} are connected by assumption.

By induction on the order $|G|$ of $G$, we will prove the statement that any connected Cayley graph on $G$ with valency at least four admits a nowhere-zero $3$-flow. By Lemma \ref{abelian}, this is true when $|G|<6$ as $G$ must be abelian in this case. Now assume that $|G| \geq 6$ and the statement is true for every supersolvable group which satisfies one of (i), (ii) and (iii) and is of order less than $|G|$. Let $\Ga = \Cay(G, X)$ be any connected Cayley graph on $G$ with valency at least four. We aim to show that $\Gamma$ admits a nowhere-zero $3$-flow. Once this is achieved the proof is complete by mathematical induction.

If $\Gamma$ is of even valency, then it admits a nowhere-zero $3$-flow and we are done. In what follows we assume that $\Gamma$ is of odd valency; that is, $X$ is a connection multiset of $G$ with cardinality an odd integer no less than $5$. By Lemma \ref{abelian}, if $G$ is abelian, then $\Gamma$ admits a nowhere-zero $3$-flow. Henceforth we further assume that $G$ is nonabelian so that $G'$ is nontrivial. Let $N$ be a minimal normal subgroup of $G$ contained in $G'$. By Lemma \ref{supersolvable} (iv), $N$ is a cyclic group of prime order, say, $p$. Denote by $Y$ (respectively, $W$) the submultiset of $X$ which consist of those elements of $X$ contained in $G \setminus N$ (respectively, $N$). Of course, $Y$ and $W$ form a partition of $X$, and so their cardinalities have different parity as the cardinality of $X$ is odd. Moreover, both $Y$ and $W$ are connection multisets of $G$.

\smallskip
\textsf{Case 1.} $Y$ is of even cardinality.

\smallskip
In this case $W$ is of odd cardinality. So $W$ contains at least one involution, say, $z$, as it is a connection multiset. Thus $p=2$ and $N=\langle z\rangle$. Since $N$ is normal in $G$, $z$ is a central involution of $G$. Therefore, by Lemma \ref{central}, $\Gamma$ admits a nowhere-zero $3$-flow.

\smallskip
\textsf{Case 2.} $Y$ is of odd cardinality no less than $5$.
\smallskip

By Lemma \ref{supersolvable} (iii),  $G/N$ is a supersolvable group.
Since $G=\langle X\rangle$ and $Y$ contains all elements of $X$ which are not in $N$, we have $G/N=\langle Y/N\rangle$. Hence the quotient $\Cay(G/N,Y/N)$ of $\Cay(G,Y)$ is a connected Cayley graph on $G/N$. Since $Y$ is of odd cardinality no less than $5$, $\Cay(G,Y)$ is of odd valency no less than $5$. Noting that $\Cay(G/N,Y/N)$ and $\Cay(G,Y)$ have the same valency, we see that $\Cay(G/N,Y/N)$ is of odd valency no less than $5$. Since every quotient group of a nilpotent group is nilpotent, condition (i) holds for $G/N$ if it holds for $G$. By Lemma \ref{noncyclic}, condition (ii) holds for $G/N$ if it holds for $G$. Since $(G/N)/(G'N/N)$ is isomorphic to $G/G'N$ which is abelian, $(G/N)'$ is contained in $G'N/N$. Therefore, condition (iii) holds for $G/N$ if it holds for $G$. Since $G$ satisfies (i), (ii) or (iii), we obtain that $G/N$ satisfies (i), (ii) or (iii), respectively. Thus, by the induction hypothesis, $\Cay(G/N,Y/N)$ admits a nowhere-zero $3$-flow. Hence, by Lemma \ref{quotient}, $\Cay(G,Y)$ admits a nowhere-zero $3$-flow. Since $\Cay(G,Y)$ is a spanning subgraph of $\Gamma$ with odd valency, by Lemma \ref{regular}, we conclude that $\Gamma$ admits a nowhere-zero $3$-flow.

\smallskip
\textsf{Case 3.} $Y$ is of cardinality 1.
\smallskip

In this case $Y=\{y\}$ for some involution $y$ of $G$. Since $N$ is a normal subgroup of $G$ with order $p$ and $G=\langle X\rangle=\langle y,N\rangle=\langle N\rangle\rtimes\langle y\rangle$, we have $|G|=2p$. Recall that $|G|\geq6$ and $p$ is odd. Since $X$ is a connection set or connection multiset of odd cardinality no less than $5$, there exist $a,b\in N$ such that $Z:=\{y,a,a^{-1},b,b^{-1}\}$ is a subset or submultiset of $X$. By Lemma \ref{normal}, $\Cay(G,\{y,a,a^{-1}\})$ and $\Cay(G,\{y,b,b^{-1}\})$ are both closed ladders with $y$ as their rung involution. Therefore, by Lemma \ref{2close}, $\Gamma$ admits a nowhere-zero $3$-flow.

\smallskip
\textsf{Case 4.} $Y$ is of cardinality $3$ and $G$ has a minimal normal subgroup $L$ other than $N$ which is contained in $G'$.
\smallskip

Note that $L\cap N=\{1\}$ as $L$ and $N$ are distinct minimal normal subgroups of $G$. If $L\cap X=\emptyset$, then the problem is boiled down to Case 2 by replacing $N$ by $L$. Now we assume $L\cap X\neq\emptyset$. Since $Y$ includes all elements of $X$ which are not contained in $N$, we have $L\cap Y\neq\emptyset$. By Lemma \ref{normal}, $\Cay(\langle Y\rangle,Y)$ is a closed ladder. Set $X=\{x,y,z,a,b,\ldots\}$, where $x,y,z\in Y$, $z$ is the rung involution for $\Cay(\langle Y\rangle,Y)$, and $a, b \in N$ with $\{a, b\}$ inverse-closed. By the proof of Lemma \ref{normal}, $\Cay(\langle z,a,b\rangle,\{z,a,b\})$ is also a closed ladder with $z$ as its rung involution. Therefore, by Lemma \ref{2close}, $\Gamma$ admits a nowhere-zero $3$-flow.

\smallskip
\textsf{Case 5.} $Y$ is of cardinality $3$ and $N$ is the only minimal normal subgroup of $G$ contained in $G'$.
\smallskip

In this case $Y$ is of the form $Y=\{x,y,z\}$, where $z$ is an involution and $\{x,y\}$ is inverse-closed. If $X$ is of odd cardinality no less than $7$, then $X$ contains a submultiset $\{z,a_{1},a_{2},b_{1},b_{2}\}$ such that $a_{1},a_{2}, b_{1},b_{2} \in N$ and both $\{a_{1},a_{2}\}$ and $\{b_{1},b_{2}\}$ are inverse-closed. By the proof of Lemma \ref{normal}, $\Cay(\langle a_{1},a_{2}\rangle,\{a_{1},a_{2},z\})$ and $\Cay(\langle b_{1},b_{2}\rangle,\{b_{1},b_{2},z\})$ are both closed ladders with $z$ as their rung involution. Thus, by Lemma \ref{2close}, $\Gamma$ admits a nowhere-zero $3$-flow when $X$ is of odd cardinality no less than $7$.

Now we assume that $X$ is of cardinality $5$. Since $G$ is supersolvable, by Lemma \ref{supersolvable} (ii), $G'$ is nilpotent. It follows that every Sylow subgroup of $G'$ is characteristic in $G'$ and therefore normal in $G$. Since $N$ is the only minimal normal subgroup of $G$ contained in $G'$, $G'$ is a $p$-group. If $p=2$, then $X$ contains a central involution of $G$ and so $\Gamma$ admits a nowhere-zero $3$-flow by Lemma \ref{central}. Now we assume that $p>2$. Then $X=\{x,y,z,b,b^{-1}\}$ for some $b\in N$. By the N/C Lemma (\cite[1.6.13]{R1995}), the centralizer $C_{G}(N)$ of $N$ in $G$ is normal in $G$ and $G/C_{G}(N)$ is isomorphic to a subgroup of $\Aut(N)$. It is well known (see \cite[1.5.5]{R1995}) that the automorphism group of a cyclic group of order $p$ is a cyclic group of order $p-1$. Therefore, $G/C_{G}(N)$ is a cyclic group of order dividing $p-1$. Since $G'$ is a $p$-group and $N$ is a normal subgroup of $G'$ with order $p$, we obtain that $G'$ is contained in $C_{G}(N)$.

\smallskip
\textsf{Subcase 5.1.} Both $x$ and $y$ are involutions.
\smallskip

Then $x,y$ and $z$ are all involutions. Since $G'$ is a $p$-group, it follows that $\{x,y,z\}\cap G'=\emptyset$. Since $G=\langle x,y,z,b,b^{-1}\rangle$ (as $\Ga$ is connected) and $b\in G'$, we have  $G/G'=\langle xG',yG',zG'\rangle$. Noting that $xG',yG'$ and $zG'$ are all involutions, $G/G'$ is an elementary abelian $2$-group of order at most $8$. If $\Cay(G,\{x,y,z\})$ is bipartite, then $\Gamma$ admits a nowhere-zero $3$-flow by Lemma \ref{regular}. Now we assume that $\Cay(G,\{x,y,z\})$ is not bipartite. Then $\Cay(G,\{x,y,z\})$ is a simple graph. Since $X=\{x,y,z,b,b^{-1}\}$ and $b\notin \{x,y,z\}$, $\Gamma$ is a simple graph. Since $\Cay(G,\{x,y,z\})$ is not bipartite, the quotient $\Cay(G/G',\{xG',yG',zG'\})$ of $\Cay(G,\{x,y,z\})$ with respect to $G'$ is not a bipartite graph. Therefore, by the proof of Lemma \ref{normal}, we have $G/G'=\langle xG'\rangle\times\langle yG'\rangle$, $zG'=xyG'$, and $\Cay(G/G',\{xG',yG',zG'\})$ is isomorphic to the M\"obius ladder $M_{2}$. Recalling that $G/C_{G}(N)$ is a cyclic group of order dividing $p-1$ and $G'$ is contained in $C_{G}(N)$, we have $\{x,y,z\}\cap C_{G}(N)\neq\emptyset$. Without loss of generality, we may assume that $y\in C_{G}(N)$. Then $G=C_{G}(N)$ or $G/C_{G}(N)=\langle xC_{G}(N)\rangle$. In particular,  the index of $C_{G}(N)$ in $G$ is at most $2$. Set $h=xy$ and $U=\{x,y\}$. Then $\langle U\rangle=\langle h\rangle\rtimes\langle y\rangle$ is a dihedral group and thus is of even order. Note that $U$ is inverse-closed. Since $h^{y}=h^{-1}$ and $b^{y}=b\neq b^{-1}$, we have $b\notin \langle x,y\rangle$. Therefore, $\langle x,y,b\rangle=\langle b\rangle\rtimes\langle U\rangle$ and it follows that $\langle b\rangle\cap\langle U\rangle=\{1\}$. Thus, by Lemma \ref{index}, $\Gamma$ admits a nowhere-zero $3$-flow.

\smallskip
\textsf{Subcase 5.2.} $x=y^{-1}$ and $y$ is of order greater than $2$.
\smallskip

In this case we have $G=\langle y,b,z\rangle$ as $\Ga$ is connected. Since $b\in G'$, it follows that $G/G'=\langle yG',zG'\rangle$. Since $G'$ is a $p$-group and $p$ is an odd prime, any Sylow $2$-subgroup of $G$ is isomorphic to the Sylow $2$-subgroup of $G/G'$.

If $G$ has a noncyclic Sylow $2$-subgroup, then the Sylow $2$-subgroup of $G/G'$ is not cyclic. Since $zG'$ is an involution and $G/G'$ is abelian, $G/G'=\langle yG',zG'\rangle=\langle zG'\rangle\times\langle yG'\rangle$ and $yG'$ is of even order. It follows that $\Cay(G,\{y,y^{-1},z\})$ is a spanning cubic bipartite subgraph of $\Gamma$ with bipartition $\{\langle y^{2}\rangle G'\cup \langle y^{2}\rangle yz G', \langle y^{2}\rangle y G'\cup \langle y^{2}\rangle z G'\}$. By Lemma \ref{regular}, $\Gamma$ admits a nowhere-zero $3$-flow.

If $G$ is a nilpotent group with a cyclic Sylow $2$-subgroup, then $G$ has a unique involution which is a central involution. It follows that $z$ is a central involution of $G$. By Lemma \ref{central}, $\Gamma$ admits a nowhere-zero $3$-flow.

Henceforth we assume that any Sylow $2$-subgroup of $G$ is cyclic and $G$ is not a nilpotent group. By our assumption, $G'$ is of square-free order. On the other hand, $G'$ is a $p$-group as proved earlier. Hence $G'$ is of order $p$. Since $b\in G'$, it follows that $G'=\langle b\rangle=N$. Since any Sylow $2$-subgroup of $G$ is cyclic, the Sylow $2$-subgroup of $G/G'$ is cyclic. Write $c:=yz$. Since $G/G'=\langle yG',zG'\rangle$ is abelian and $zG'$ is an involution, we have $G/G'=\langle cG'\rangle$ and hence $G=\langle c\rangle\langle b\rangle$. Since $G$ is not a cyclic group, we have $b\notin\langle c\rangle$ and therefore
$\langle c\rangle\cap\langle b\rangle=\{1\}$. Note that $c$ is of even order, say, $2m$. Then $G/G'$ is of order $2m$ and it follows that $G$ is of order $2mp$. Therefore, $\langle c\rangle$ is a maximal subgroup of $G$ of index $p$.
If $y\in \langle c\rangle$, then $\langle c\rangle=\langle y,z\rangle$. By Lemma \ref{normal}, $\Cay(\langle y,z\rangle,\{y,y^{-1},z\})$ and $\Cay(\langle b,z\rangle,\{b,b^{-1},z\})$ are both circular ladders with $z$ as their rung involution. So, by Lemma \ref{2close}, $\Gamma$ admits a nowhere-zero $3$-flow. In what follows, we assume $y\notin \langle c\rangle$. Then $G=\langle y,c\rangle$.
It follows that $L:=\langle y\rangle\cap\langle c\rangle$ is a normal subgroup of $G$ and $y,z\notin L$. Since $b\notin\langle c\rangle$, we have $b\notin L$. Therefore, $X\cap L=\emptyset$. Clearly, $G/L$ is a supersolvable group satisfying condition (iii). If $L$ is nontrivial, then by the induction hypothesis, the quotient $\Cay(G/L,X/L)$ of $\Gamma$ admits a nowhere-zero $3$-flow and hence, by Lemma \ref{quotient}, $\Gamma$ admits a nowhere-zero $3$-flow. Now we assume that $L$ is trivial, that is, $\langle y\rangle\cap\langle c\rangle=1$. By Lemma \ref{transversal}, there exists a left transversal $A$ of $\langle y\rangle$ in $G$ such that $\langle c\rangle\subseteq A$. Since the order of $c$ is $2m$, we may write $A=\{a_{0},a_{1},\ldots,a_{2m-1},a_{2m},a_{2m+1},\ldots,a_{n-1}\}$, where $a_{i}=c^{i}$ for $0\leq i\leq 2m-1$. Since $zG'$ is an involution of $G/G'$, we have $zG'=c^{m}G'$.
Therefore, $\langle c^{m},b\rangle=\langle b, z\rangle$.
Since $G=\langle c\rangle\langle b\rangle$ and $\langle c\rangle\cap\langle b\rangle=\{1\}$, $\langle c\rangle$ is a left transversal of $\langle b\rangle$ in $G$ and it follows that $\{1,c^{-1},\ldots,c^{1-m}\}$ is a left transversal of $\langle z,b\rangle$ in $G$. Hence $\{a_{0}^{-1},a_{1}^{-1},\ldots,a_{m-1}^{-1}\}$ is a left transversal of $\langle z,b\rangle$ in $G$. It follows that every connected component of $\Cay(G,\{b,b^{-1},z\})$ is a translation $\Theta_{i}$ of $\Cay(\langle b,z\rangle,\{b,b^{-1},z\})$ by $a_{i}^{-1}$ for some $0\leq i\leq m-1$. Since
$\{a_{0},a_{1},\ldots,a_{n-1}\}$ is a left transversal of $\langle y\rangle$ in $G$, every connected component of $\Cay(G,\{y,y^{-1}\})$ is a translation $\Lambda_{j}$ of $\Cay(\langle y\rangle,\{y,y^{-1}\})$ by $a_{i}$ for some $0\leq i\leq n-1$. Note that each $\Lambda_{j}$ is a cycle. Note also that $\Theta_{0}\cong\Cay(\langle b,z\rangle, \{b,b^{-1},z\})$ and $\Theta_{i}$ is isomorphic to $\Theta_{0}$ for any $1\leq i\leq m-1$.
By Lemma \ref{normal}, $\Cay(\langle b,z\rangle, \{b,b^{-1},z\})$ is a circular ladder with $z$ as its rung involution. Therefore, $\Theta_{i}$ is a circular ladder with rungs of the form $\{a_{i}^{-1}b^{j},a_{i}^{-1}zb^{-j}\}$, where $0\leq j\leq p-1$ for any $0\leq i\leq m-1$. Denote by $\Delta$ the cycle in $\Gamma$ with vertices $1,y,c,cy,\ldots,c^{2m-1},c^{2m-1}y$ successively around the cycle. Then the following statements hold:

\smallskip
\textsf{Claim 1.} for $0\leq i\leq m-1$, $\{a_{i}^{-1}z,a_{i}^{-1}\}$ ($=\{c^{2m-i-1}y,c^{2m-i}\}$) is the unique common edge of $\Delta$ and $\Theta_{i}$, and moreover this edge is a rung of $\Theta_{i}$;

\smallskip
\textsf{Claim 2.} for $0\leq j\leq 2m-1$, $\{a_{j},a_{j}y\}$ is the unique common edge of $\Delta$ and $\Lambda_{j}$, and for $2m\leq j\leq n-1$, $\Delta$ and $\Lambda_{j}$ have no common edges.
\smallskip

Set $\Sigma=\Delta\cup\Cay(G,\{b,b^{-1},z\})$. Since $\Delta$ is a cycle, it admits a nowhere-zero $3$-flow. Note that $\Cay(G,\{b,b^{-1},z\})$ is the edge-disjoint union of the circular ladders $\Theta_{0},\Theta_{1},\ldots,\Theta_{m-1}$. Thus, by Claim 1 and Theorem \ref{cup}, $\Sigma$ admits a nowhere-zero $3$-flow. Clearly, $\Cay(G,\{y,y^{-1}\})$ is the edge-disjoint union of cycles $\Lambda_{0},\Lambda_{1},\ldots,\Lambda_{n-1}$. So $\Cay(G,\{b,b^{-1},z\})$ has no common edges with each $\Lambda_{j}$ for $0\leq j\leq n-1$ as it has no common edges with $\Cay(G,\{y,y^{-1}\})$. Thus, by Claim 2, $\Sigma$ has at most one common edge with each $\Lambda_{j}$ for $0\leq j\leq n-1$. Therefore, by Lemma \ref{union}, $\Sigma\cup\Cay(G,\{y,y^{-1}\})$ admits a nowhere-zero $3$-flow. However, this graph is exactly $\Ga$ as
\begin{equation*}
\Gamma=\Delta\cup\Gamma=\Delta\cup\big(\Cay(G,\{b,b^{-1},z\})\cup\Cay(G,\{y,y^{-1}\})\big)
=\Sigma\cup\Cay(G,\{y,y^{-1}\}).
\end{equation*}
Therefore, $\Gamma$ admits a nowhere-zero $3$-flow. This completes the proof by mathematical induction.
\qed

\noindent {\textbf{Acknowledgements}}
\medskip

The first author was supported by the Chinese Scholarship Council (No.201808505156), the Basic Research and Frontier Exploration Project of Chongqing (cstc2018jcyjAX0010) and the National Natural Science Foundation of China (No.11671276). The second author was supported by the Research Grant Support Scheme of The University of Melbourne.

{\small

\end{document}